\title{Polyhedral geometry, supercranks, and combinatorial witnesses of congruences for partitions into three parts}
\author{Felix Breuer\thanks{Felix Breuer and Brandt Kronholm were supported by the Austrian Science Fund (FWF) special research group Algorithmic and Enumerative Combinatorics SFB F50, project number F5006-N15.
}\\Research Institute for Symbolic Computation (RISC)\\ Johannes Kepler University, A-4040 Linz, Austria\\ \texttt{breuer@risc.jku.at}
\and
Dennis Eichhorn\\Department of Mathematics \\ University of California, Irvine \\ Irvine, CA 92697-3875\\ \texttt{deichhor@math.uci.edu}
\and
Brandt Kronholm\footnotemark[1]\\Research Institute for Symbolic Computation (RISC)\\ Johannes Kepler University, A-4040 Linz, Austria\\ \texttt{kronholm@risc.jku.at}
}
  \renewcommand{\leq}{\leqslant}
  \renewcommand{\geq}{\geqslant}
  \newcommand{\PPP}[0]{\mathcal{P}} 
   \newcommand{\CCC}[0]{\mathcal{C}} 
     \newcommand{\FFF}[0]{\mathcal{F}} 
     \newcommand{\HHH}[0]{\mathcal{H}} 
        \newcommand{\RRR}[0]{\mathcal{R}} 
  \newcommand{\Z}[0]{\mathbb{Z}}		
  \newcommand{\R}[0]{\mathbb{R}}		
  \newcommand{\mset}[2]{\left\{ #1 \; \middle|\; #2 \right\}}
  \newcommand{\mmod}{\operatorname{mod}} 
  \newcommand{\fract}[1]{\left\{ #1 \right\}} 
  \newcommand{\floor}[1]{\left\lfloor #1 \right\rfloor} 
  \newcommand{\mmat}[1]{\begin{pmatrix} #1\end{pmatrix}}
  \newcommand{\msmat}[1]{\left(\begin{smallmatrix} #1\end{smallmatrix}\right)}
  \newcommand{\rar}{\rightarrow}
\newtheorem*{rep@theorem}{\rep@title}
\newcommand{\newreptheorem}[2]{%
\newenvironment{rep#1}[1]{%
 \def\rep@title{#2 \ref{##1}}%
 \begin{rep@theorem}}%
 {\end{rep@theorem}}}
    \newtheorem{theorem}{Theorem}[section]
  \newtheorem*{theorem*}{Theorem}
    \newtheorem{lemma}[theorem]{Lemma}
    \newtheorem{corollary}[theorem]{Corollary}
    \newtheorem{proposition}[theorem]{Proposition}
  \theoremstyle{definition}
    \newtheorem{definition}[theorem]{Definition}
\definecolor{antired}{rgb}{0,0.6,0.4}
  \newcommand{\rprop}[1]{Proposition~\ref{#1}}
  \newcommand{\rthm}[1]{Theorem~\ref{#1}}
  \newcommand{\rsec}[1]{Section~\ref{#1}}
\def\lcm#1{\allowbreak\mkern1mu{\operator@font lcm}(#1)}
\def\imod#1{\allowbreak\mkern9mu({\operator@font mod}\,\,#1)}
\begin{document}                \large
\maketitle
\begin{abstract}
In this paper, we use a branch of polyhedral geometry, Ehrhart theory,
to expand our combinatorial understanding of congruences for partition
functions.
Ehrhart theory allows us to give a new decomposition of partitions,
which in turn allows us to define statistics called {\it supercranks}
that combinatorially witness every instance of divisibility of
$p(n,3)$ by any prime $m \equiv -1 \pmod 6$, where $p(n,3)$ is
the number of partitions of $n$ into
three parts.
A rearrangement of lattice points allows us to
demonstrate with explicit bijections how to divide these sets of partitions into $m$ equinumerous classes.
The behavior for primes $m' \equiv 1 \pmod 6$ is also discussed.

\end{abstract}

\section{Introduction}
\label{introduction}
Ramanujan  \cite{Ram} observed and proved the following congruences for the ordinary partition function:
\[ p(5n+4)\equiv0\pmod5\]
\[ p(7n+5)\equiv0\pmod7 \]
\[ p(11n+6)\equiv0\pmod{11}. \]

In 1944, Freeman Dyson \cite{Dyson} called for {\it direct} proofs of Ramanujan's congruences
that would give concrete demonstrations of how the sets of partitions of $5n+4$, $7n+5$, and $11n+6$ could be split into
$5, 7,$ and $11$ equinumerous classes, respectively.

\begin{quote} ...it is unsatisfactory to receive no concrete idea of how the division is to be made.  We require a proof which will not appeal to generating functions, but will demonstrate by cross-examination of the partitions themselves...\cite{Dyson}
\end{quote}
He conjectured that a very simple statistic on partitions,
the largest part minus the smallest part,
performs this division when considered modulo 5 and 7.
He named this statistic the ``rank" of a partition,
and he further hypothesized the existence of a different statistic,
called the ``crank,'' that would witness Ramanujan's congruence modulo 11 in the same way.
In \cite{AS}, Atkin and Swinnerton-Dyer proved Dyson's conjecture about the rank,
and in 1988, Andrews and Garvan \cite{Andrews2} found a crank that not only witnessed Ramanujan's congruence modulo 11, but also
witnessed Ramanujan's congruences modulo 5 and 7
with a new division into 5 and 7 classes, respectively.
However, in both cases, the proofs were analytic, and they did not
employ a cross-examination of the partitions themselves
as Dyson had hoped.

In \cite{GKS}, Garvan, Kim, and Stanton finally gave a combinatorial proof of Ramanujan's congruences by finding explicit bijections among
equinumerous classes.
These bijections were realized by 5-, 7-, and 11-cycles, respectively,
that exhaust the corresponding sets of partitions.
Their cycles led to new crank statistics that were different from the rank
and the Andrews-Garvan crank, but that witnessed the congruence modulo 11 as Dyson had requested,
and also witnessed the congruences modulo 5 and 7
with another new division into 5 and 7 classes, respectively.
Remarkably,
more than 70 years after Dyson's original request,
there is still no known bijective proof that the rank witnesses Ramanujan's congruences modulo 5 and 7.

In this paper, we explore the possibilities for giving bijective proofs of partition
congruences by considering $p(n,3)$, the number of partitions of $n$ into three parts.
We define a new crank-like statistic on partitions that has a remarkable property.
Unlike
the rank and the few known
cranks for $p(n)$ which witness congruences along certain arithmetic progressions,
we discover cranks for $p(n,3)$ that witnesses {\it each and every} instance of divisibility modulo a given prime.
We call cranks with this remarkable property {\it supercranks},
and they were first treated by the second and third authors in \cite{EichhornKronholm-forthcoming}.
The new techniques we introduce here allow us to give the first infinite family of
supercranks, and more generally establish an entirely new framework for treating
certain types of partition congruences and crank statistics.
Although we currently restrict our attention to $p(n,3)$, in
\cite{BreuerEichhornKronholm-forthcoming} we address a much more general class of
partition functions.

Our new method for discovering bijective proofs comes from an appeal to polyhedral geometry, specifically, Ehrhart Theory \cite{Beck,Breuer,Ehrhartpolynomial,Ehrhart1,Ehrhart2}.
Partitions of an integer $n$ into three parts can be viewed as integer vectors lying inside a triangle in 3-dimensional space.
We call this triangle the partition triangle, and
the natural inequalities that determine partitions with three parts define a polyhedral cone that we call the partition cone.
Following Ehrhart, the partition cone can be tiled by integer translates of a certain fundamental parallelepiped.
This tiling of the partition cone then induces a tiling of the partition triangle with slices of the fundamental parallelepiped (Figures~\ref{fig:cone-tiling}, \ref{fig:master-tile-set} and \ref{fig:tiling-examples}).
This construction allows us to decompose each such partition $\lambda$ into a partition $\mu$ in the fundamental parallelepiped and a non-negative integer vector $\tau$ that determines which translate of the fundamental parallelepiped $\lambda$ lies in.
We call $\mu$ the \emph{box remainder} and $\tau$ the \emph{box quotient}, which together give the \emph{box decomposition} of $\lambda$ (Figure~\ref{fig:decomposition} and Definition~\ref{def:decomposition}).
The box decomposition has a purely combinatorial description that we study in detail in \cite{BreuerEichhornKronholm-forthcoming}.
In this paper, we maintain a geometric vantage point whereby the box decomposition allows us to view the set of all partitions of $n$ into three parts as a union of six copies of triangular arrays $T_k$ of lattice points.
In particular, as Ehrhart already pointed out, this provides a geometric interpretation of the coefficients of the quasipolynomial formula for $p(n,3)$ in a binomial basis.

With this geometric insight in hand, when $p(n,3)$ exhibits divisibility,
it is often possible to reassemble the triangles $T_k$ into a rectangle of lattice points wherein the number of lattice points along the width or height of this rectangle is divisible by the modulus $m$ we are interested in (Figure~\ref{fig:unlabeled-2m-2}).
Cycling partitions along the rows of this rectangle provides a combinatorial witness for divisibility.
Since this construction works for any arrangement of triangles $T_k$ into a suitable rectangle, this method provides a whole family of combinatorial witnesses for congruences of $p(n,3)$ (Theorems~\ref{InformalTheorem} and \ref{thm:unlabeled-technical}).
Cranks for every such witness are given by a composition of the piecewise linear functions that perform the
original decomposition into triangles $T_k$ and the subsequent rearrangement into a rectangle.

In general, essentially the only way to write down a formula for these cranks is by this direct appeal
to the mechanics of this rearrangement.
However, for every prime $m \equiv -1 \pmod 6$,
there is one special scheme for rearranging these triangles which causes all of the resulting cranks
to simplify into a single simple formula: the largest part minus the smallest part modulo $m$.
This simple statistic, denoted $c_{LS}$, witnesses each and every divisibility of $p(n,3)$ by any such $m$.
In other words, the polyhedral geometry approach provides us with a deep structural insight into the set of partitions into three parts, allowing us to construct a supercrank (Theorem~\ref{main theorem}).

The rest of the paper is arranged as follows.
In Section 2, we give background on formulas for $p(n,3)$
and fully characterize when $p(n,3)$ is divisible by any prime $m \equiv -1 \pmod 6$.
In Section 3, we define terms surrounding combinatorial proofs of partition congruences,
and we state our main theorem.
In Section 4, we give our key decomposition of partitions into three parts via Ehrhart theory.
In Section 5, we use this decomposition to demonstrate a general method for constructing Ehrhart
cranks that witness congruences for $p(n,3)$.
In Section 6, we prove our main theorem based on the mathematics developed in the previous sections.
Section 7 provides an alternate proof of the main theorem which, in contrast to Section 6, is confined to only the partition triangle.
In Section 8, we use our techniques to consider the divisibility of $p(n,3)$ by primes $m' \equiv 1 \pmod 6$.
In Section 9, we offer some concluding remarks and indicate some directions for future study.

\section{The Arithmetic and Congruence Properties of $p(n,3)$}
\label{sec:arithmetic}
\subsection{Historical Background for $p(n,3)$.}

Historically, there have been several ways to compute the values of $p(n,3)$.
One method is the expansion of the generating function for $p(n,3)$ as rational function.

\begin{equation}\label{gf}
\sum_{n=0}^{\infty}p(n,3)q^n = \frac{q^3}{(1-q)(1-q^2)(1-q^3)}=q^3\times\sum_{j=0}^{\infty}q^j\times\sum_{j=0}^{\infty}q^{2j}\times\sum_{j=0}^{\infty}q^{3j}.
\end{equation}

However, closed form formulas are far more convenient and attractive.
In the middle of the nineteenth century, DeMorgan \cite{DeMorgan,Dickson} proved that $p(n,3)$ was the nearest integer to
\begin{equation}\label{n^2/12}
\frac{n^2}{12},
\end{equation}
and Warburton \cite{Dickson,Warburton} established
\begin{equation}\label{p(6k+r,3)}
    p(6k+r,3)=\left\{
      \begin{array}{lr}
      3k^2 & ~~~~r=0~\\
      3k^2 + k & ~~~~r=1~\\
      3k^2 + 2k & ~~~~r=2~\\
      3k^2+3k+1 & ~~~~r=3~\\
      3k^2+4k+1 & ~~~~r=4~\\
      3k^2+5k+2 & ~~~~r=5.
      \end{array}
 \right.
 \end{equation}

Following the work of Herschel \cite{Herschel},
by the turn of the 20th century even more methods for computing $p(n,3)$ had been developed by
Cayley \cite{Cayley1}, Sylvester \cite{Sylvester}, Glaisher \cite{Glaisher1}, and others \cite{Dickson,GuptaT}.
For example:
\begin{equation}\label{circulators}
p(n,3)=\frac{n^2-\frac{7}{6}}{12}-\frac{(-1)^n}{8}+\frac{e^{\frac{2i\pi n}{3}}+e^{\frac{4i\pi n}{3}}}{9}.
\end{equation}
Their efforts were not solely focused on $p(n,3)$, but on $p(n,d)$ in general as it applied to the Theory of Invariants.
Each of (\ref{n^2/12}), (\ref{p(6k+r,3)}), and (\ref{circulators}) are considered quasipolynomials for $p(n,3)$.

Unfortunately, the above expressions for $p(n,3)$ have some shortcomings.
The methods used to obtain them either do not easily generalize, or they require mathematics with a significant amount of depth.
Moreover
the expressions in (\ref{n^2/12}), (\ref{p(6k+r,3)}), and (\ref{circulators}) elicit
very little information about the partitions themselves.
In the next section, we consider an alternative that eradicates all of these shortcomings.

\subsection{Ehrhart's method for computing quasipolynomials.}

Half a century ago, the French geometer Eug\`{e}ne Ehrhart devised a very elegant method for computing formulas for functions such as $p(n,3)$, $p(n,d)$,
 and many others \cite{Beck,Ehrhartpolynomial,Ehrhart1,Ehrhart2}.
While Ehrhart was interested in counting integer points in dilates of polytopes, or, in other words,
the number of solutions of a linear Diophantine system as a function of the system's right-hand side,
his method boils down on the arithmetic level to straightforward manipulation of the relevant generating function.
The key insight is that it is useful to bring the denominator into the form $(1-q^j)^d$. For $p(n,3)$, we compute:

{\normalsize
\begin{equation}\label{GFa}
\sum_{n=0}^{\infty}p(n,3)q^n = \frac{q^3}{(1-q)(1-q^2)(1-q^3)} = \frac{q^3(1+q+q^2+q^3+q^4+q^5)(1+q^2+q^4)(1+q^3)}{(1-q^6)(1-q^6)(1-q^6)}
\end{equation}
\begin{equation}\label{GFb}
=\frac{q^3+q^4+2q^5+3q^6+4q^7+5q^8+4q^9+5q^{10}+4q^{11}+3q^{12}+2q^{13}+q^{14}+q^{15}}{(1-q^6)^3}
\end{equation}
\begin{equation}\label{GFc}
=(q^3+q^4+2q^5+3q^6+4q^7+5q^8+4q^9+5q^{10}+4q^{11}+3q^{12}+2q^{13}+q^{14}+q^{15})\times\sum_{k=0}^{\infty}{k+2 \choose 2}q^{6k}.
\end{equation}}

Hence,

\begin{eqnarray}
p(6k,3) & = 0{k+2 \choose 2} + 3{k+1 \choose 2} +  3{k \choose 2} \label{p(6k,3)}\\
p(6k+1,3) & = 0{k+2 \choose 2} + 4{k+1 \choose 2} +  2{k \choose 2}\label{p(6k+1,3)}\\
p(6k+2,3) & = 0{k+2 \choose 2} + 5{k+1 \choose 2} +  1{k \choose 2} \label{p(6k+2,3)}\\
p(6k+3,3) & = 1{k+2 \choose 2} + 4{k+1 \choose 2} + 1{k \choose 2}  \label{p(6k+3,3)}\\
p(6k+4,3) & = 1{k+2 \choose 2} + 5{k+1 \choose 2} + 0{k \choose 2}  \label{p(6k+4,3)}\\
p(6k+5,3) & = 2{k+2 \choose 2} + 4{k+1 \choose 2} + 0{k \choose 2}. \label{p(6k+5,3)}
\end{eqnarray}

In this paper, our focus is on the {\it structure underlying} lines (\ref{p(6k,3)}) through (\ref{p(6k+5,3)}).
We will show that these expressions contain fundamental information about arithmetic, geometric, and combinatorial properties of the partitions themselves.
Notice that (\ref{p(6k,3)}) through (\ref{p(6k+5,3)}) are given in the binomial basis which, when simplified to the monomial basis, are equivalent to the expressions in (\ref{p(6k+r,3)}).

In particular, the formulas (\ref{p(6k,3)}) through (\ref{p(6k+5,3)}) as well as (\ref{n^2/12}), (\ref{p(6k+r,3)}),
and (\ref{circulators}) show that $p(n,3)$ is a quasipolynomial, as are all $p(n,d)$ for fixed $d$.
A \emph{quasipolynomial} $\Pi(k)$ is a polynomial in $k$ whose coefficients are periodic functions of $k$.
Equivalently, a quasipolynomial is a function $\Pi$ such that there exist polynomials $\Pi_0,\ldots,\Pi_{l-1}$ with the property
$\Pi(lk+r) = \Pi_{r}(k)$ for all $k\geq 0$ and $r=0,\ldots,l-1$.
The integer $l$ is called a \emph{period} of $\Pi$. The minimal period of the partition function $p(n,3)$ is $6$ as we have seen above.
In general the minimal period of $p(n,d)$ is $\lcm{d}:=\lcm{1,\ldots,d}$,
which is implicit in the work of
Herschel \cite{Herschel}, Cayley \cite{Cayley1}, Sylvester \cite{Sylvester}, Glaisher \cite{Glaisher1}, and others.

\subsection{Establishing Infinite Families of Divisibility for $p(n,3)\pmod m$}

With a quasipolynomial formula in hand,
it is straightforward to determine various divisibility properties of $p(n,3)$.
For any prime $m \equiv -1 \pmod 6$, we can determine
when $m | p(n,3)$ completely.

\begin{proposition}\label{6j-1prop}
Let $m  = 6j-1$ be prime.
Then
$$
p(n,3) \equiv 0 \pmod m  \text{\quad if and only if \quad} n \equiv \pm (0,1,2,2m-2,2m+1) \pmod {6m} .
$$
\end{proposition}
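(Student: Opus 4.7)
The plan is to proceed directly from the quasipolynomial formulas (\ref{p(6k,3)})--(\ref{p(6k+5,3)}), which after expansion into the monomial basis recover Warburton's formula (\ref{p(6k+r,3)}) expressing $p(6k+r,3)$ as a simple quadratic in $k$. Since $m=6j-1$ is an odd prime, the condition $p(6k+r,3)\equiv 0\pmod m$ becomes a quadratic congruence in $k$ modulo $m$, which I solve separately in each of the six residue classes $r\in\{0,1,2,3,4,5\}$ and then translate back into congruences on $n=6k+r$ modulo $6m$ via the bijection between residues of $k$ mod $m$ and residues of $n=6k+r$ mod $6m$ within that class.

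For five of the six residue classes the computation is essentially by inspection. First note that $3$ is a unit mod $m$ with $3^{-1}\equiv 2j\pmod m$, coming from $3\cdot 2j=m+1$. The respective quadratics $3k^2$, $k(3k+1)$, $k(3k+2)$, $(3k+1)(k+1)$, $(3k+2)(k+1)$ for $r\in\{0,1,2,4,5\}$ all factor over $\mathbb{F}_m$, so their roots are drawn from $\{0,-1,-2j,-4j\}$. Each root $k\equiv a\pmod m$ lifts to $n\equiv 6a+r\pmod{6m}$, and the identities $12j=2(m+1)$ and $24j=4(m+1)$ immediately reduce these to the required list. For instance, $k\equiv -2j$ with $r=1$ gives $n\equiv -12j+1\equiv -(2m+1)$; $k\equiv -4j$ with $r=2$ gives $n\equiv -24j+2\equiv 2m-2$; the analogous computations for $r=4,5$ give the negatives of these and, together with the roots $k\equiv 0,-1$, account for all of $0,\pm 1,\pm 2,\pm(2m-2),\pm(2m+1)\pmod{6m}$.

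The case $r=3$, where $3k^2+3k+1$ does not generally factor over $\mathbb{F}_m$, is the one genuine obstacle. Completing the square (multiply through by $12$) yields $(6k+3)^2\equiv -3\pmod m$, so solutions exist iff $-3$ is a quadratic residue modulo $m$. For this step I invoke the classical supplementary law that, for an odd prime $p\neq 3$, $\left(\tfrac{-3}{p}\right)=1$ iff $p\equiv 1\pmod 3$; since $m\equiv -1\pmod 6$ forces $m\equiv 2\pmod 3$, the quantity $-3$ is a non-residue and the class $r=3$ contributes no solutions. This is consistent with the statement, since none of the residues $\pm(0,1,2,2m-2,2m+1)$ are congruent to $3\pmod 6$. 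Assembling the contributions from the five non-empty cases and comparing residues mod $6$ shows that the solution set is exactly the listed one, establishing the claimed equivalence.
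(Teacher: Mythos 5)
Your proposal is correct and follows essentially the same route as the paper: both reduce to the Warburton quasipolynomial $p(6k+r,3)$, solve the resulting quadratic congruence in $k$ modulo $m$ in each residue class (the linear factorizations for $r\neq 3$, lifted back to $n$ modulo $6m$ via $3^{-1}\equiv 2j$ and $12j=2m+2$), and dispose of the class $r\equiv 3\pmod 6$ by completing the square and showing $-3$ is a quadratic non-residue modulo $m$. The only cosmetic differences are that the paper runs $r$ over $\{-2,\ldots,3\}$ to treat $r=\pm1,\pm2$ uniformly as $3k^2+rk$, and evaluates $\left(\tfrac{-1}{m}\right)$ and $\left(\tfrac{3}{m}\right)$ separately where you invoke the standard criterion $\left(\tfrac{-3}{p}\right)=1\iff p\equiv 1\pmod 3$.
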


\noindent
We will often write these congruences as,
for $k\geq 0$,
\begin{equation}
p(6mk\pm 0,1,2,2m-2,2m+1;3)\equiv 0\pmod{m}.
\end{equation}

\begin{proof}

Write $n  = 6k +r$ for $-2 \leq r \leq 3$ and appeal to (\ref{p(6k+r,3)}).

\noindent
{\bf Case 1:}  $r=0$.

Then $m|3k^2 \Longleftrightarrow m|k$; i.e., $6k = n \equiv 0 \pmod {6m}$.

\noindent
{\bf Case 2:}  $r=\pm 1$ or $\pm 2$.

Then $m|(3k^2 + rk) \Longleftrightarrow m|k$ (i.e., $n \equiv r \pmod {6m}$)
    or $m|(3k + r)$.
Now $m|(3k + r) \Longleftrightarrow k \equiv -r/3 \pmod m$, and since $m = 6j-1$, the inverse of $3$
is $2j$, and so this is the same as $k \equiv -2jr \pmod m$.
Multiplying both sides of this congruence by $6$, adding $r$, and observing that
$12j = 2m+2$, we have that this is equivalent to
$$
6k+r = n \equiv -12rj+r = -2mr-r \pmod {6m} .
$$

Thus, in Case 2,
we have

\noindent
$p(n,3) \equiv 0 \pmod m \Longleftrightarrow n \equiv r \pmod {6m} \text{\ or\ } n \equiv -2mr-r \pmod {6m}$;

\noindent
i.e., exactly when $n \equiv \pm (1,2,2m+1,4m+2) \equiv \pm (1,2,2m-2,2m+1) \pmod {6m}$.

\noindent
{\bf Case 3:}  $r=3$.

Observe that $3k^2+3k+1 = 3(k+1/2)^2 + 1/4$.  Also observe that, modulo $m = 6j-1$, the inverse
of $2$ is $3j$, and so the inverse of $4$ is $9j^2$.
Thus $m | p(6k+3, 3)$ if and only if
$$
3k^2+3k+1 = 3(k+1/2)^2 + 1/4 \equiv 3[(k+3j)^2 + 3j^2] \equiv 0 \pmod m .
$$
Dividing by 3, this holds if and only if
$(k+3j)^2  \equiv -3j^2 \pmod m$,
which can only have solutions if $-3$ is a square modulo $m$.
Since
$$
\left ( \frac{-1}{m} \right ) = (-1)^{(m-1)/2} \qquad \text{and} \qquad
\left ( \frac{3}{m} \right ) = (-1)^{\lfloor (m+1)/6 \rfloor} ,
$$
we see that no matter what the parity of $j$ is, $-3$ is not a square modulo $m$.
\end{proof}

\section{Crank, Supercrank, and Statement of Main Theorem}

We saw in the last section that modulo any prime $m \equiv -1 \pmod 6$,
the quasipolynomial formula for $p(n,3)$ allows us to see exactly when $m | p(n,3)$.
A natural question to ask is,
``is there a combinatorial way we could have predicted this without appealing to the formula?"
Also, ``are there crank statistics that witness any of this divisibility?"
The remainder of this paper is devoted to demonstrating affirmative answers to both of
these questions.
In this section, we make rigorous the notions of ``combinatorial witness,'' ``rank,"
and ``crank," we state our main theorem, and we outline the proof.

\begin{definition}
Let $S$ be a finite set of size $N=|S|$, and let $m$ be a positive integer. A \emph{combinatorial witness} for $m|N$ is an explicit partition
$$ S = S_0 \sqcup S_1 \sqcup \ldots \sqcup S_{m-1} $$
of $S$ into disjoint sets $S_i$ together with explicit bijections
$$ \phi_{i,j}:S_i \rightarrow S_j $$
which witness that any two of the $S_i$ are of the same size.
\end{definition}

Dyson's original request for a crank statistic has now been satisfied in a few different
ways.  Here, we will refer to any statistic that forms a combinatorial witness for
divisibility as a crank.

\begin{definition}
A \emph{crank} is a function $c:S\rightarrow \{0,\ldots,m-1\}$ defining the classes $S_i = c^{-1}(i)$ and \emph{cycles} are given by a permutation $\pi:S\rightarrow S$ such that $\pi|_{S_i}$ is a bijection between $S_i$ and $S_{i+1}$, where the index $i+1$ is understood modulo $m$.
\end{definition}

In this article, we introduce a new class of cranks, which we call \emph{Ehrhart cranks}, in Sections~\ref{sec:ehrhart} and \ref{sec:Ehrhart-cranks} below.
Ehrhart cranks make excellent combinatorial witnesses as they reveal a tremendous amount of structure in sets of partitions with a restricted number of parts.

One remarkable crank is the following. Let $P(n,3)$ denote the set of partitions $\lambda=\lambda_1+\lambda_2+\lambda_3$ with three parts $\lambda_1\geq \lambda_2 \geq \lambda_3>0$. Let a modulus $m$ be fixed. For every $\lambda\in P(n,3)$ define
\begin{eqnarray}
\label{eqn:cLS}
  c_{LS}(\lambda) := \lambda_1 - \lambda_3 \mod m,
\end{eqnarray}
to be the largest part minus the smallest part modulo $m$. As we will show in Section~\ref{labeled origami}, $c_{LS}$  defines a crank for \emph{all} the congruences given in Proposition~\ref{6j-1prop}. This is a particularly strong property for which we coin a new name: {\it Supercrank}.

\begin{definition}
Let $m>1$ be a fixed integer. Let $S(n)$ be a finite set for every nonnegative integer $n$.
A function $c$ defined on $\bigcup_{n\in  \Z_{\geq 0}} S(n)$ is a \emph{supercrank} if for \emph{all} $n\in  \Z_{\geq 0}$ such that
$m|\#S(n)$, the function $c$ is a crank witnessing this divisibility.
\end{definition}

A main result of this paper is that $c_{LS}$ has this property
for the set of partitions into three parts for every prime $m \equiv -1 \pmod 6$.

\begin{theorem}\label{main theorem}
Let $m=6j-1$ be prime. Then $c_{LS}$, largest part minus smallest part modulo $m$, is a supercrank witnessing $m|p(n,3)$ for each and every $n\in  \Z_{\geq 0}$ for which this divisibility holds, as characterized in Proposition~\ref{6j-1prop}.
\end{theorem}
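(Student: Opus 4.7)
The plan is to deploy the Ehrhart crank framework developed in Sections~\ref{sec:ehrhart} and \ref{sec:Ehrhart-cranks}. Recall that the box decomposition realizes $P(n,3)$ as the disjoint union of exactly six triangular lattice-point arrays $T_k$, whose dimensions are encoded by the binomial-basis coefficients in the formulas (\ref{p(6k,3)})--(\ref{p(6k+5,3)}). For example, when $n=6k+3$ one obtains one triangle of side $\binom{k+2}{2}$, four triangles of side $\binom{k+1}{2}$, and one triangle of side $\binom{k}{2}$; the other five residue classes modulo $6$ decompose analogously into six triangles.

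To prove that $c_{LS}$ is a supercrank, it suffices by the general construction of Section~\ref{labeled origami} to exhibit, for each of the residue classes of $n \pmod{6m}$ listed in Proposition~\ref{6j-1prop}, an explicit reassembly of the six triangles $T_k$ into a rectangular lattice-point array of dimensions $a\times m$, with the property that the horizontal coordinate modulo $m$ of a partition $\lambda$ placed in the rectangle equals $\lambda_1-\lambda_3 \bmod m$. Once such an assembly is in place, the permutation of $P(n,3)$ that shifts each partition one step along its row is a bijection that increments $c_{LS}$ by $1\bmod m$, thereby realizing $c_{LS}$ as the crank of an explicit combinatorial witness.

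The technical heart of the argument is thus a case analysis over the nine residues $0,\pm 1,\pm 2,\pm(2m-2),\pm(2m+1)\pmod{6m}$ identified in Proposition~\ref{6j-1prop}. For each residue I would: (i) read off the sizes of the relevant triangles $T_k$ from (\ref{p(6k,3)})--(\ref{p(6k+5,3)}); (ii) specify an origami of hinges, reflections, and translations that glues the six triangles into a rectangle one of whose sides has length $m$; and (iii) verify that the horizontal coordinate in the resulting rectangle agrees with $\lambda_1-\lambda_3\bmod m$ as a piecewise linear function of the box decomposition $\lambda=\mu+\tau$. Steps (i) and (ii) recapitulate the modular arithmetic already carried out in the proof of Proposition~\ref{6j-1prop}: the very divisibility identities $m\mid(3k+r)$ that produced those congruences are precisely what force the triangle dimensions to admit an edge of length $m$.

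I expect step (iii) to be the principal obstacle. Among all the rearrangements of the $T_k$ that yield \emph{some} Ehrhart crank, only one very particular origami will cause the resulting crank function to collapse to the elementary formula $c_{LS}$; the content of the theorem is essentially that this special origami exists and simultaneously handles every divisible $n$. The key will be to choose the assembly so that the long axis of the rectangle tracks $\lambda_1-\lambda_3$ linearly, without discontinuity at the internal hinges between triangles. Once the correct origami is identified in each of the nine cases, the verification of step (iii) should reduce to routine coordinate-level checks on the pair $(\mu,\tau)$, and the symmetry $n\leftrightarrow -n\pmod{6m}$ evident in the statement of Proposition~\ref{6j-1prop}---together with the fact that $c_{LS}$ is insensitive to the involution exchanging largest and smallest parts in its negative---should roughly halve the amount of work.
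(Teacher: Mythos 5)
You have the paper's strategy in outline---box-decompose $P(n,3)$ into six triangles, reassemble them into a rectangle, and cycle---but several of your specific commitments would fail as stated. First, you ask for a rectangle of dimensions $a\times m$ in which the horizontal coordinate is congruent to $\lambda_1-\lambda_3$ and a one-step shift along a row increments $c_{LS}$ by $1$. No such assembly exists: the six triangles have side $\approx mk'$ and (being images of whole triangles under injective affine maps) cannot be packed into a strip of width $m$; the achievable rectangles have dimensions such as $m(3k'+1)\times\bigl(mk'+\frac{m-2}{3}\bigr)$, i.e.\ a side \emph{divisible by} $m$, not equal to $m$. Moreover, a unit lattice step inside a translated triangle $V_3\tau+\mu$ changes $\lambda_1-\lambda_3$ by a universal constant in $\{\pm3,\mp6\}$ (e.g.\ $V_3\msmat{-1\\1\\0}=\msmat{-3\\3\\0}$ gives a change of $-3$), so any cycle built from unit steps increments $c_{LS}$ by $\pm3$, never by $1$; this is harmless since $\gcd(3,m)=1$, but your normalization is unachievable. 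Second, and more seriously, a single row-cycling scheme does not cover all nine residues: for $r'\in\{0,\pm1,\pm2\}$ it is the height $\ell_2=mk'$ that carries the divisibility (for $r'=\pm1,\pm2$ the width is $3mk'\pm1$ or $3mk'\pm2$, coprime to $m$), the cycles must run along columns, and $c_{LS}$ provably fails to be affine in the horizontal coordinate---there are two jumps in every row between sub-rectangles. The paper is forced to split into a ``vertical'' family and a ``horizontal'' family of arrangements. A small warning sign: your illustrative residue $n=6k+3$ has coefficient pattern $(1,4,1)$, i.e.\ triangles of three distinct sizes, which can never tile a rectangle; luckily $r=3$ never occurs among the divisible residues of Proposition~\ref{6j-1prop}, but the example suggests you have not checked which patterns actually arise.

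Separately, the step you defer as ``routine coordinate-level checks'' is where the proof actually lives, and you have not supplied the observation that makes it finite and uniform in $k$ and $m$: for a fixed residue family $r'$ and fixed box remainder $\mu$, the values of $c_{LS}$ at the three vertices of the translated triangle are constants modulo $m$ that depend on neither $k$ nor $m$ (since $6k\bmod m$ is determined by $r'$ alone). Combined with the constant per-step increments above, this reduces the entire verification to checking finitely many triangle-to-triangle adjacencies plus one wrap-around per case, which is exactly what Figures~\ref{fig:labeled-origami-cases-vertical} and \ref{fig:labeled-origami-cases-horizontal} record. Without this reduction---and without actually exhibiting the nine arrangements---``the special origami exists'' remains an assertion rather than a proof.
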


We now give a rough outline of how \rthm{main theorem} will be established,
and we begin by describing a way in which one might geometrically demonstrate
divisibility of $p(n,3)$ by $m$.

By treating a partition $n=\lambda_1 + \lambda_2 + \lambda_3$ as an integer vector
$\lambda=(\lambda_1,\lambda_2,\lambda_3)\in\Z^3$,
we can see that the set of partitions of $n$ into three parts is
\[
P(n,3) = \mset{(\lambda_1,\lambda_2,\lambda_3) \in\Z^3}{\lambda_1 + \lambda_2 + \lambda_3 = n \ \text{and} \  \lambda_1 \geq \lambda_2 \geq \lambda_3 > 0}.
\]
As can been seen in the example shown in Figure~\ref{fig:partition-triangle},
these lattice points fit into an obvious triangular region $\PPP(n,3)$, which is determined by the above equation and inequalities
applied in $\R^3$.
If we find a way to rearrange these points evenly into a rectangle such that
the number of lattice points along the width or height of the rectangle is divisible by the modulus $m$ we are interested
in, then we have a geometric demonstration that $p(n,3)$ is divisible by $m$
(see Figure \ref{fig:unlabeled-2m-2} for an illustration).
Of course, we would like to do this not for one particular $n$,
but, if possible, for every single $n$ such that $m | p(n,3)$.
By studying these lattice points in a way outlined by Ehrhart \cite{Beck,Ehrhartpolynomial,Ehrhart1,Ehrhart2},
a great deal of structure is revealed in Section~\ref{sec:ehrhart}.
In particular, for every $n$, this collection of lattice points dissects nicely into six
neatly arranged triangular collections of points (see Figure~\ref{fig:decomposition}).
As it turns out,
for every prime $m \equiv -1 \pmod 6$,
we are able to find a uniform method for arranging these triangles into rectangles with
side lengths divisible by $m$ for every $n$ such that $m | p(n,3)$,
and thus we have a geometric proof of the divisibility.

In fact, the rectangles proving divisibility offer an obvious way to divide the
partitions into cycles of length $m$.
We can define a ``crank" statistic on the lattice points (and equivalently, on the partitions) as simply
``the distance from the appropriate edge of the rectangle,"
and then our partitions divide into $m$ equal classes
according to their ``crank" modulo $m$.
As it turns out, whenever we have one arrangement of our triangles into a useful rectangle,
we actually have many such arrangements.
For each, we get a different crank statistic that witnesses the divisibility. We call cranks of this type ``Ehrhart cranks", which are defined in \rsec{sec:Ehrhart-cranks}.

In \rsec{labeled origami}, we find that among all of the possible arrangements of triangles into rectangles,
there is one that is by far the most well poised.
There is one way in particular of arranging the triangles
such that a constant multiple of the distance modulo $m$
from one
edge of the rectangle is
identically the largest part minus the smallest part of the partition.
In other words,
we have \rthm{main theorem}, a statement as simple as Dyson's original conjecture.
What is quite striking is that, whereas Dyson's rank witnesses the first two Ramanujan congruences,
this new supercrank
witnesses \emph{every} congruence of the form
$p(n,3) \equiv 0 \pmod m$
for \emph{every} prime $m \equiv -1 \pmod6 $.


\section{The Box Decomposition of Restricted Partitions}
\label{sec:ehrhart}

In this section, we introduce the box decomposition of a restricted partition $\lambda$ into a box remainder $\mu$,
which is a partition in the fundamental parallelepiped $\FFF_3$ defined below,
and a box quotient $\tau$, which is a non-negative integer vector.
This decomposition is motivated by polyhedral geometry, and is the result of applying a classic construction in Ehrhart theory \cite{Beck,Ehrhartpolynomial,Ehrhart1,Ehrhart2} in a partition theoretic context.
While this decomposition can be defined purely in combinatorial terms \cite{BreuerEichhornKronholm-forthcoming}, the geometric point of view will provide the key intuition for the rest of this paper, and so we introduce the relevant background from Ehrhart theory in this section.
In particular, we take great care to visualize the construction in order to build geometric intution.
To be clear, our use here of the word {\it box} is not motivated by the geometry at hand but by the Ferrers Diagram of $\lambda$.
Since partitions into three parts are the topic of this paper, we will restrict our attention to the three-dimensional case.
Note, however, that the constructions below generalize in a straightforward manner to partitions with any fixed number of parts; this is treated in detail in \cite{BreuerEichhornKronholm-forthcoming}.
For general introductions to Ehrhart theory and polyhedral geometry, we recommend the textbooks \cite{Beck} and \cite{Ziegler}.

\begin{figure*}[t]
  \centering
   \includegraphics[angle=270,width=7cm]{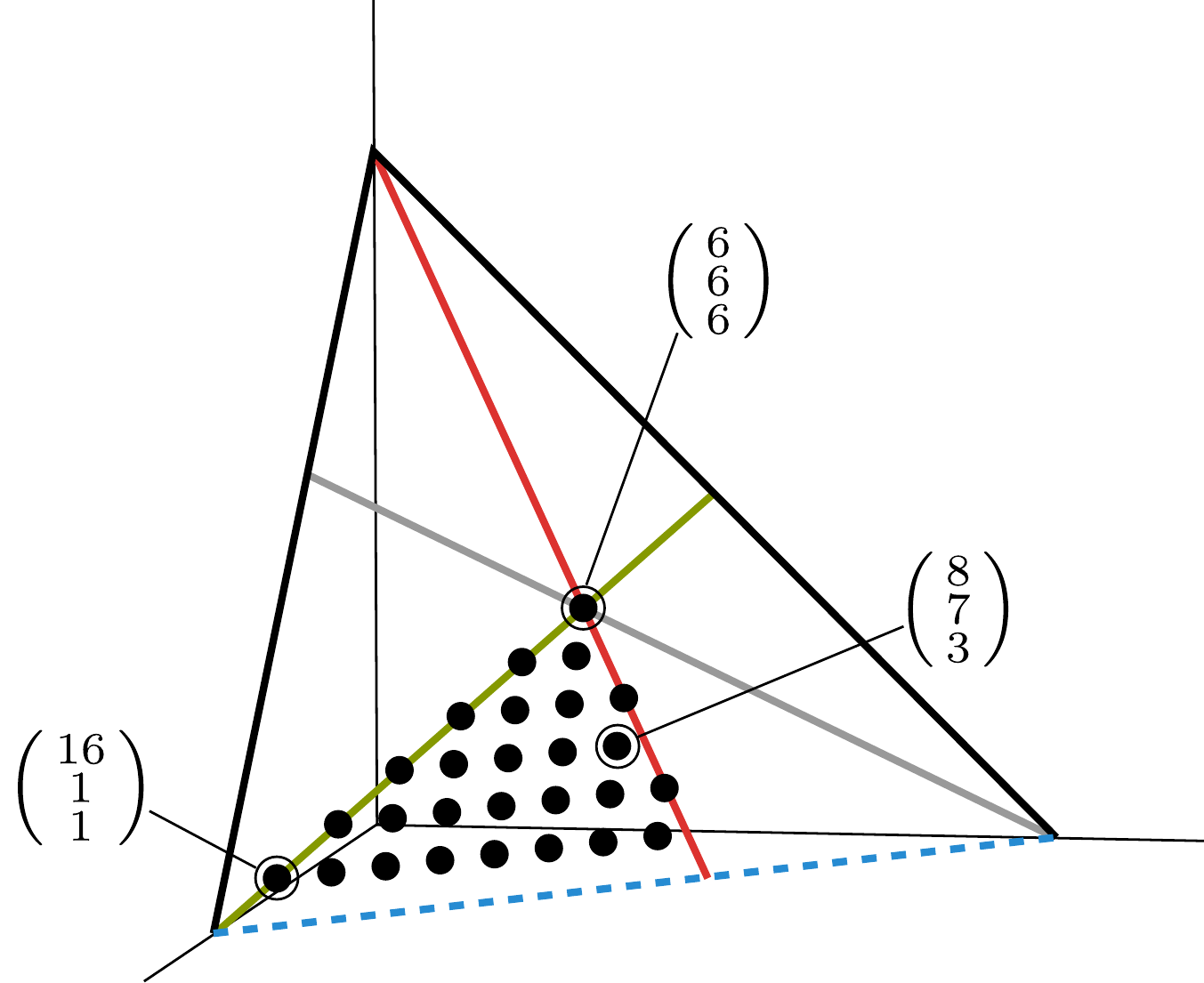}
  \caption{ \label{fig:partition-triangle} The partition triangle $\PPP(18,3)$ and the set $P(18,3) = \Z^3 \cap \PPP(18,3)$ of partitions it contains. The outer triangle is the intersection of the non-negative octant with the constraint $x_1+x_2+x_3=18$. Adding the inequality constraints $x_1\geq x_2$ (shown in red), $x_2 \geq x_3$ (green) and $x_3 > 0$ (dashed blue) yields the half-open partition triangle $\PPP(18,3)$. The lattice points $\lambda\in P(18,3)$ are shown as black dots.}
\end{figure*}

We now set the stage for our definition of the box decomposition in Definition~\ref{def:decomposition} below.
As illutrated in Figure~\ref{fig:partition-triangle}, the starting point for the geometric approach is to view
a partition $n=\lambda_1 + \lambda_2 + \lambda_3$ as an integer vector
$\lambda=(\lambda_1,\lambda_2,\lambda_3)\in\Z^3$.
Experienced partition theorists may not be used to this convention, so just to be
absolutely clear, throughout the rest of the paper, we will literally use the word
``partition" to mean such a vector in $\Z^3$.
The \emph{height} $|\lambda|=\lambda_1+\lambda_2+\lambda_3$ is the sum of coordinates of $\lambda$; i.e., the number being partitioned. The set of all partitions into three parts is then the set $C_3=\Z^3\cap \CCC_3$ of integer vectors or \emph{lattice points} in the \emph{partition cone}

\begin{equation}\label{p(n,d)cone}
  \CCC_3  =  \mset{x\in\R^{3}}{ x_1 \geq x_2 \geq x_3 > 0}.
\end{equation}
The set of partitions of a fixed $n$ into three parts is then the set $P(n,3)=\Z^3 \cap \PPP(n,3)$ of lattice points in the \emph{partition polytope} or \emph{partition triangle}
\begin{equation}\label{p(n,d)triangle}
  \PPP(n,3) = \CCC_3 \cap \mset{x\in\R^{3}}{ |x| = n},
\end{equation}
which is the intersection of the partition cone with the plane at height $n$. With this notation, the restricted partition function is simply $p(n,3)=\#P(n,3)$.

One important property of the partition cone is that it has the dual description
\begin{eqnarray}
  \label{eqn:cone-v-description}
   \CCC_3 = V_3 \left(\R_{\geq0}^{2}\times\R_{>0}^{1}  \right) &\text{ where }& V_3 =  \mmat{ 6 & 3 & 2 \\ 0 & 3 & 2 \\ 0 & 0 & 2 }.
\end{eqnarray}
The columns of $V_3$ are called the \emph{generators} of the cone $\CCC_3$.
Equation (\ref{eqn:cone-v-description}) states that $\CCC_3$ is the set of all vectors
that can be written
as a linear combination of generators with non-negative real coefficients, where the coefficient of the last generator has to be strictly positive.
Note that $\CCC_3$ is \emph{simplicial}; i.e., the generators are linearly independent.
The generators are not uniquely determined; we could replace any generator $v$ by any positive multiple $\alpha v$ for $\alpha\in\R_{>0}$.
However, the generators in (\ref{eqn:cone-v-description}) have the crucial property that they all have integer components, and they are all at the same height.
Notice $6=\lcm{1,2,3}=:\lcm{3}$ is the lowest height at which this happens.
The idea of choosing generators in this way goes back to Ehrhart \cite{Ehrhartpolynomial,Ehrhart1,Ehrhart2}.

\begin{figure*}[t]
  \begin{subfigure}{0.48\textwidth}
  \centering
   \includegraphics[angle=270,width=6cm]{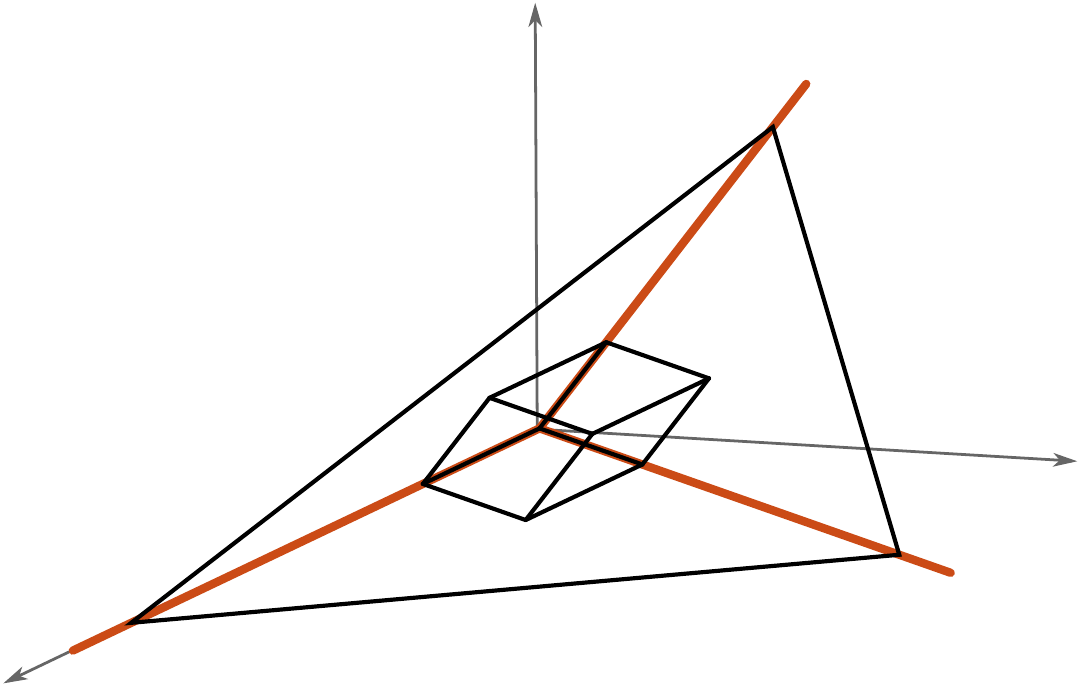}
   \caption{\label{fig:cone-tiling:l0} The fundamental parallelepiped $\FFF_3$ of $\CCC_3$.}
  \end{subfigure}
  \qquad
  \begin{subfigure}{0.48\textwidth}
  \centering
   \includegraphics[angle=270,width=6cm]{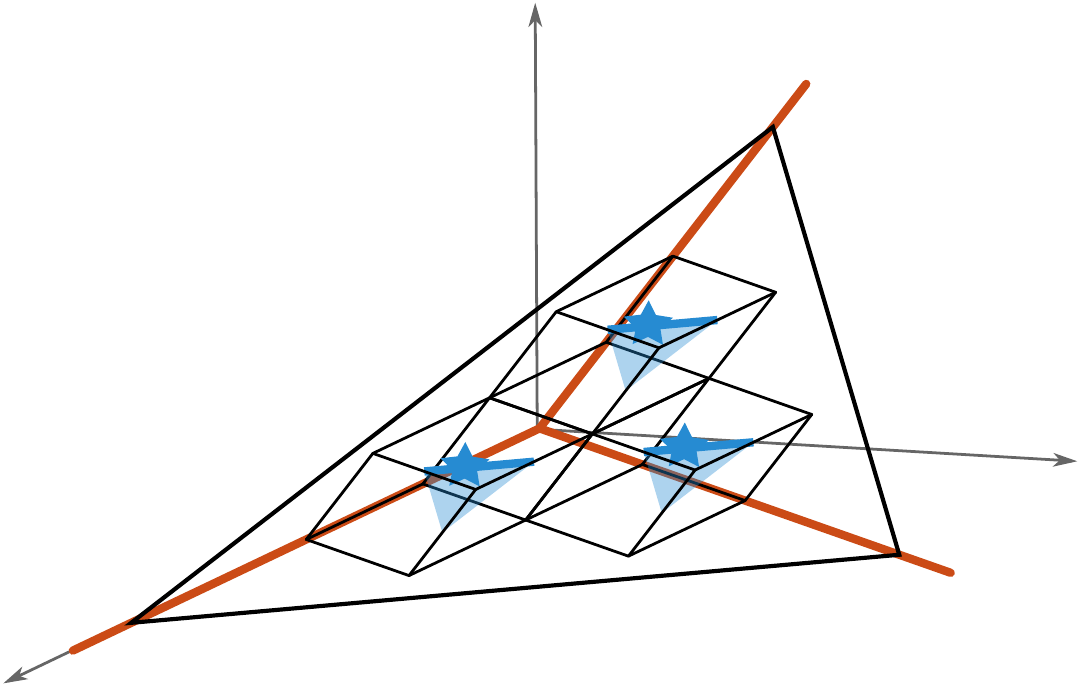}
   \caption{\label{fig:cone-tiling:l1} Translates of $\FFF_3$ by any one of the generators and their intersection with $\PPP(21,3)$. }
  \end{subfigure}

  \begin{subfigure}{0.48\textwidth}
  \centering
   \includegraphics[angle=270,width=6cm]{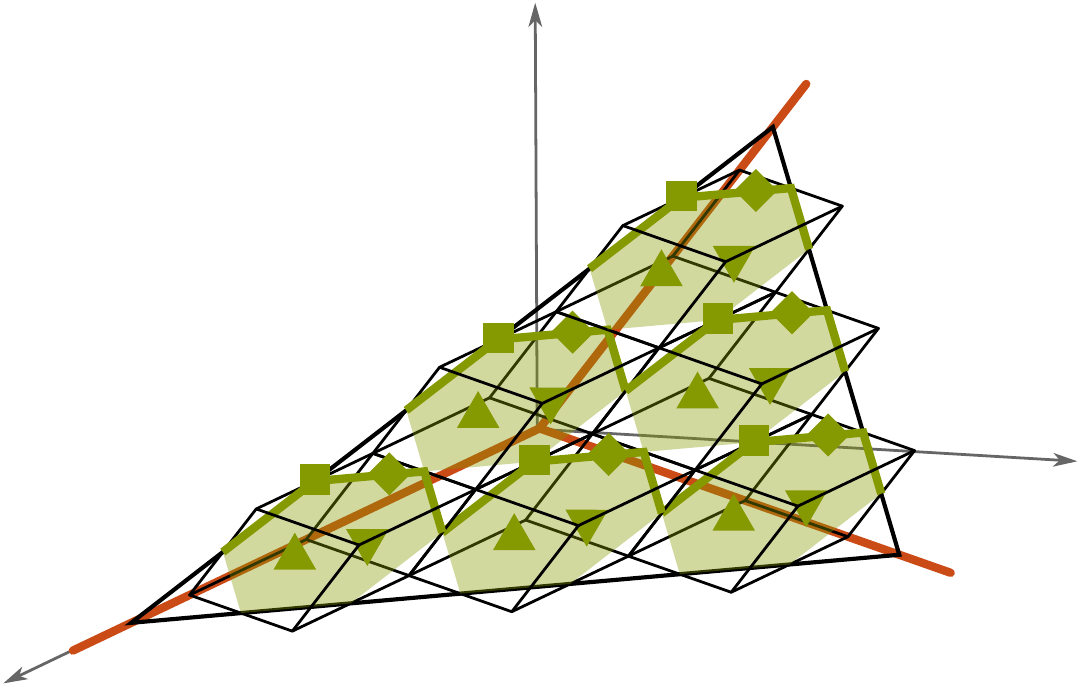}
   \caption{\label{fig:cone-tiling:l2} Translates of $\FFF_3$ by any two of the generators and their intersection with $\PPP(21,3)$. }
  \end{subfigure}
  \qquad
  \begin{subfigure}{0.48\textwidth}
  \centering
   \includegraphics[angle=270,width=6cm]{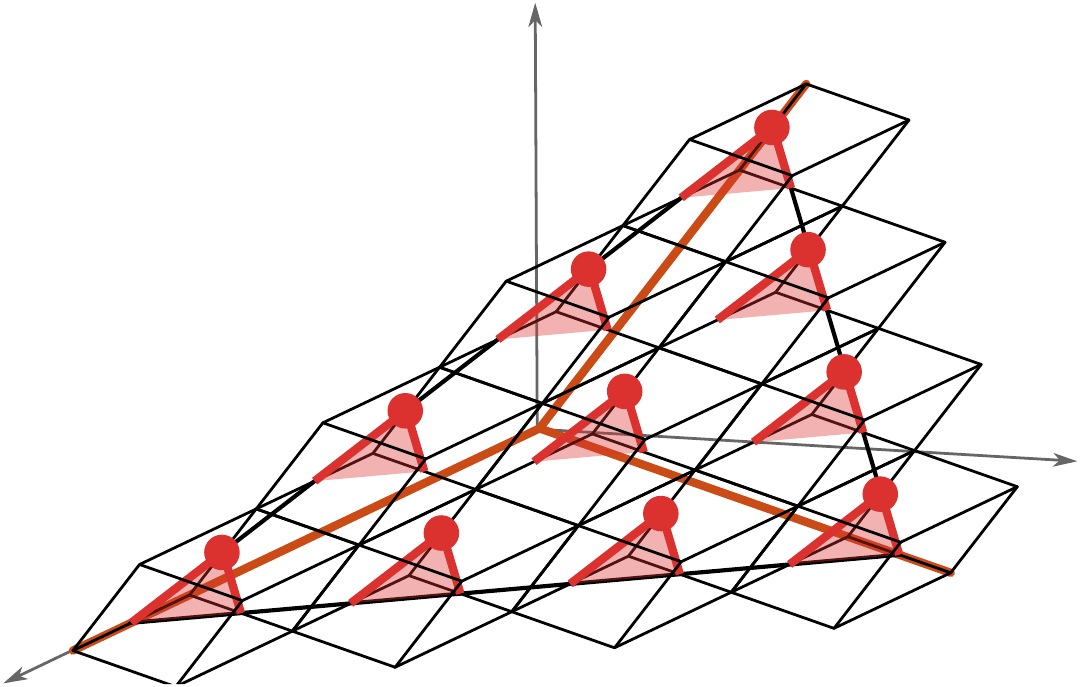}
   \caption{\label{fig:cone-tiling:l3} Translates of $\FFF_3$ by any three of the generators and their intersection with $\PPP(21,3)$.}
  \end{subfigure}
  \caption{ \label{fig:cone-tiling} The fundamental parallelepiped $\FFF_3$ tiles the cone $\CCC_3$, which induces a tiling of $\PPP(21,3)$ with slices of $\FFF_3$ at different heights.}
\end{figure*}

\begin{figure*}[t]
  \centering
   \includegraphics[angle=270,width=10cm]{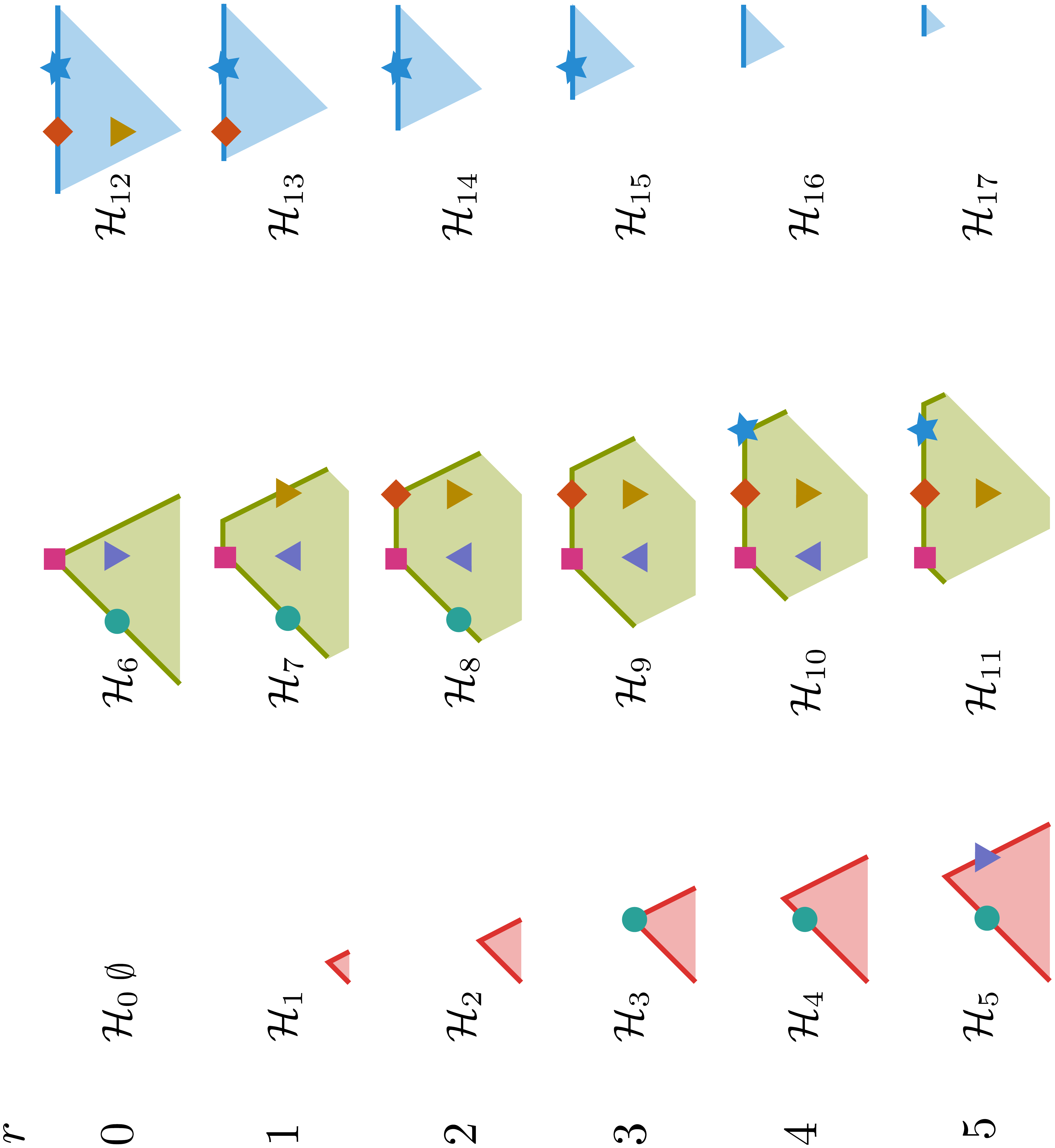}
  \caption[]{ \label{fig:master-tile-set} The slices of the fundamental parallelepiped: All the tiles $\HHH_i$ and the lattice points they contain. As we can see, there are 36 distinct lattice points in $\FFF_3$. The shapes of the lattice points $\lambda$ indicate the last two coordinates: Any lattice point $\lambda$ that is represented by a circle has last two coordinates $\lambda_2=1$ and $\lambda_3=1$. Going one step right increases $\lambda_2$ by 1. Going one step up increases $\lambda_3$ by 1. For example, the circle in $\HHH_4$ has coordinates $\msmat{2\\1\\1}$, the circle in $\HHH_8$ has coordinates $\msmat{6\\1\\1}$ and the diamond in $\HHH_{12}$ has coordinates $\msmat{7\\3\\2}$. }
\end{figure*}

As illustrated in Figure~\ref{fig:cone-tiling},
 (\ref{eqn:cone-v-description}) allows us to tile $\CCC_3$ with integer translates of the \emph{fundamental parallelepiped} $\FFF_3$, via
\begin{eqnarray}
  \label{eqn:cone-tiling}
   \CCC_3 = V_3\Z^3_{\geq0} + \FFF_3 &\text{ where }& \FFF_3 =  V_3\left( [0,1)^2 \times (0,1]^1 \right).
\end{eqnarray}
Here, $+$ denotes the Minkowski sum; i.e., $\CCC_3$ is obtained by translating $\FFF_3$ by every vector $v\in V_3\Z^3_{\geq0}$, and then taking the union, which is disjoint.
Again, we write $F_3 = \Z^3 \cap \FFF_3$. Note that the fundamental parallelepiped is not uniquely determined by the cone $\CCC_3$, but by our particular choice of generators.

For every $n$, the tiling (\ref{eqn:cone-tiling}) of $\CCC_3$ induces a tiling of the partition triangle $\PPP(n,3)$ as shown in Figure~\ref{fig:cone-tiling}.
To make this precise we define $\HHH_i=\FFF_3\cap\mset{\lambda\in\R^3}{|\lambda|=i}$ as the slice of $\FFF_3$ at height $i$,
and let $H_i=\Z^3\cap\HHH_i$ and $h^*_i = \#H_i$ denote the corresponding lattice point set and count, respectively.
All slices $\HHH_0$ through $\HHH_{17}$ and the lattice points they contain are shown in Figure~\ref{fig:master-tile-set}.
The coefficients in (\ref{p(6k,3)}) through (\ref{p(6k+5,3)}) reflect
the lattice point counts in the $\HHH_i$.

We define $T_k = \mset{v \in \Z^3_{\geq 0}}{ |v| = k }$ as the set of all non-negative integer vectors with coordinate sum $k$.
The elements of $T_k$ are necessarily arranged in a triangle pattern and are correspondingly counted by the triangular numbers $\#T_k = \binom{k+2}{2}$.
With this notation, the construction shown in Figure~\ref{fig:cone-tiling} then yields
\begin{eqnarray}
\PPP(6k+r,3) &=& \left( \HHH_{r} + V_3T_{k} \right) \cup \left( \HHH_{6+r} + V_3T_{k-1} \right) \cup \left( \HHH_{12+r} + V_3T_{k-2}  \right), \label{eqn:tiling}\\
P(6k+r,3)    &=& \left( H_{r} + V_3T_{k} \right) \cup \left( H_{6+r} + V_3T_{k-1} \right) \cup \left( H_{12+r} + V_3T_{k-2}  \right), \label{eqn:bijection} \\
p(6k+r,3)    &=& h_{r}^*{k+2  \choose 2} + h_{6+r}^*{k+1  \choose 2} + h_{12+r}^*{k \choose 2}\label{eqn:bijection2},
\end{eqnarray}
for any $r=0,\ldots,5$ and any $k\in\Z_{\geq0}$.
Here, we have made crucial use of the fact that our chosen generators are all at the same height and are integral. In particular, (\ref{eqn:bijection}) shows that the coefficients $h^*_i$ count lattice points at a certain height in the fundamental parallelepiped, and that the binomial coefficients count translation vectors $v\in T_j$.
This was Ehrhart's key observation, whence the vector $h^*$ is called Ehrhart $h^*$-vector or Ehrhart $\delta$-vector.

\begin{figure*}[t]
  \begin{subfigure}{0.32\textwidth}
  \centering
   \includegraphics[angle=270,width=3.6cm]{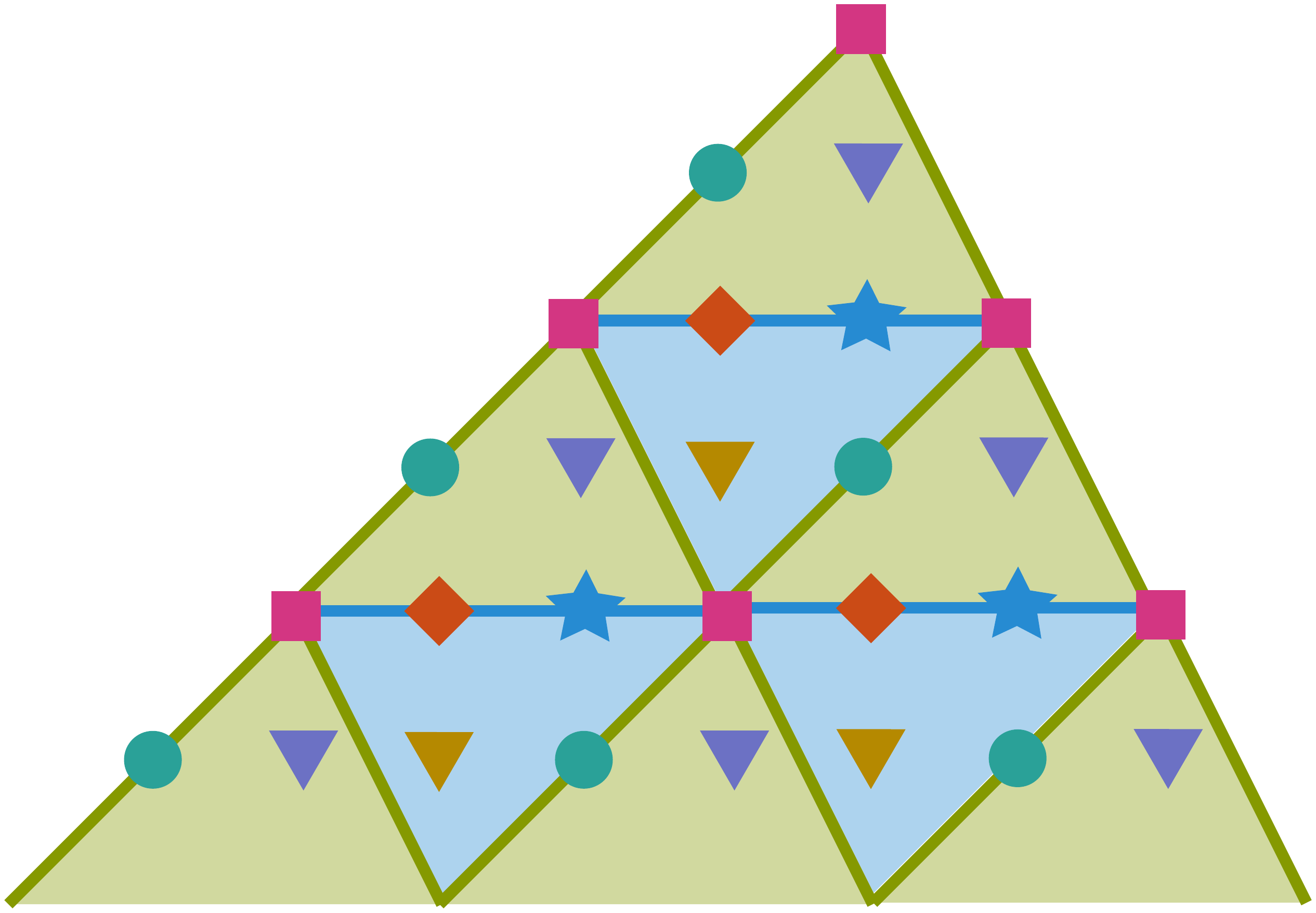}
   \caption{\label{fig:P18} $\PPP(18,3)$, $r=0$.}
  \end{subfigure}
  \begin{subfigure}{0.32\textwidth}
  \centering
   \includegraphics[angle=270,width=3.8cm]{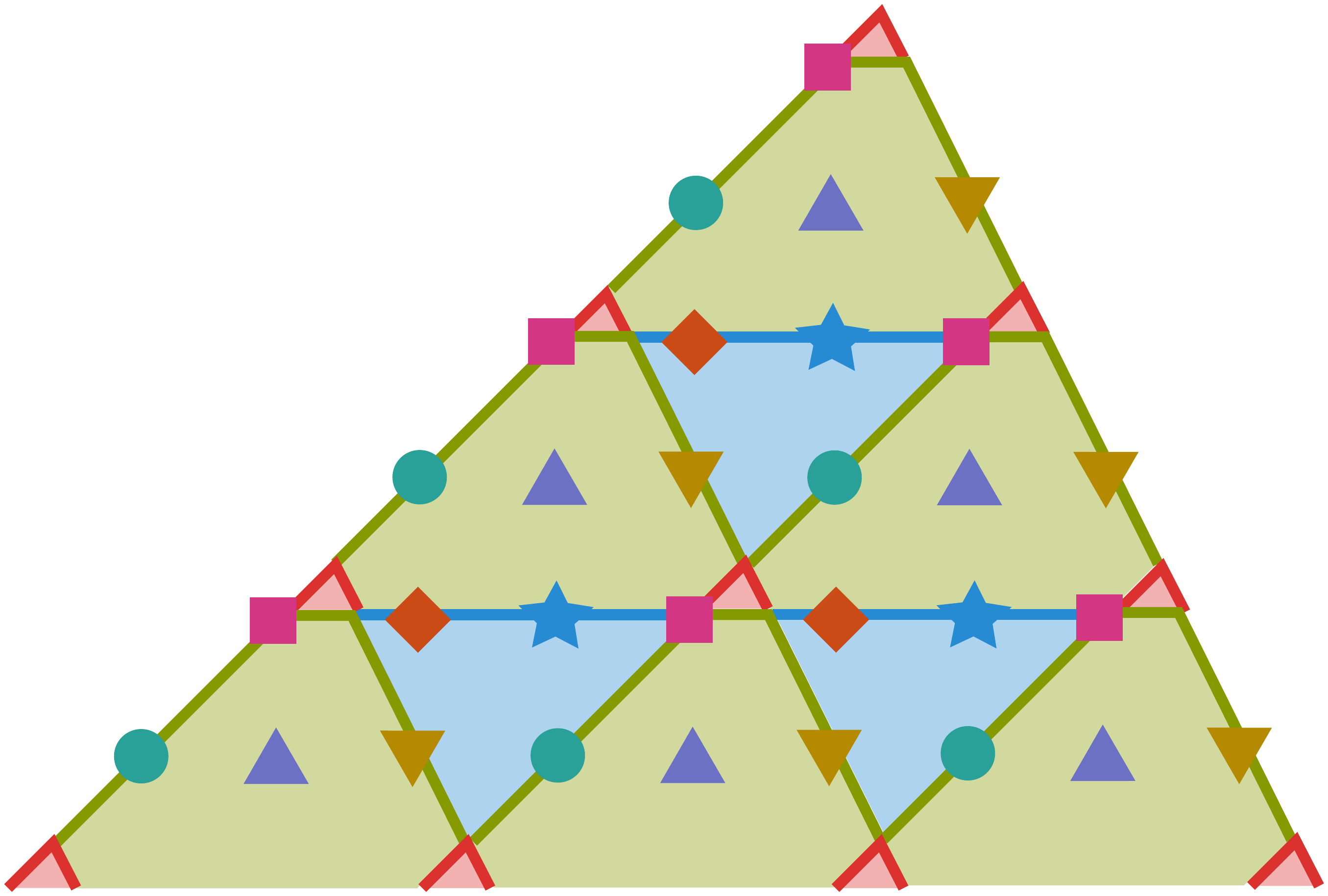}
   \caption{\label{fig:P19} $\PPP(19,3)$, $r=1$.}
  \end{subfigure}
  \begin{subfigure}{0.32\textwidth}
  \centering
   \includegraphics[angle=270,width=4cm]{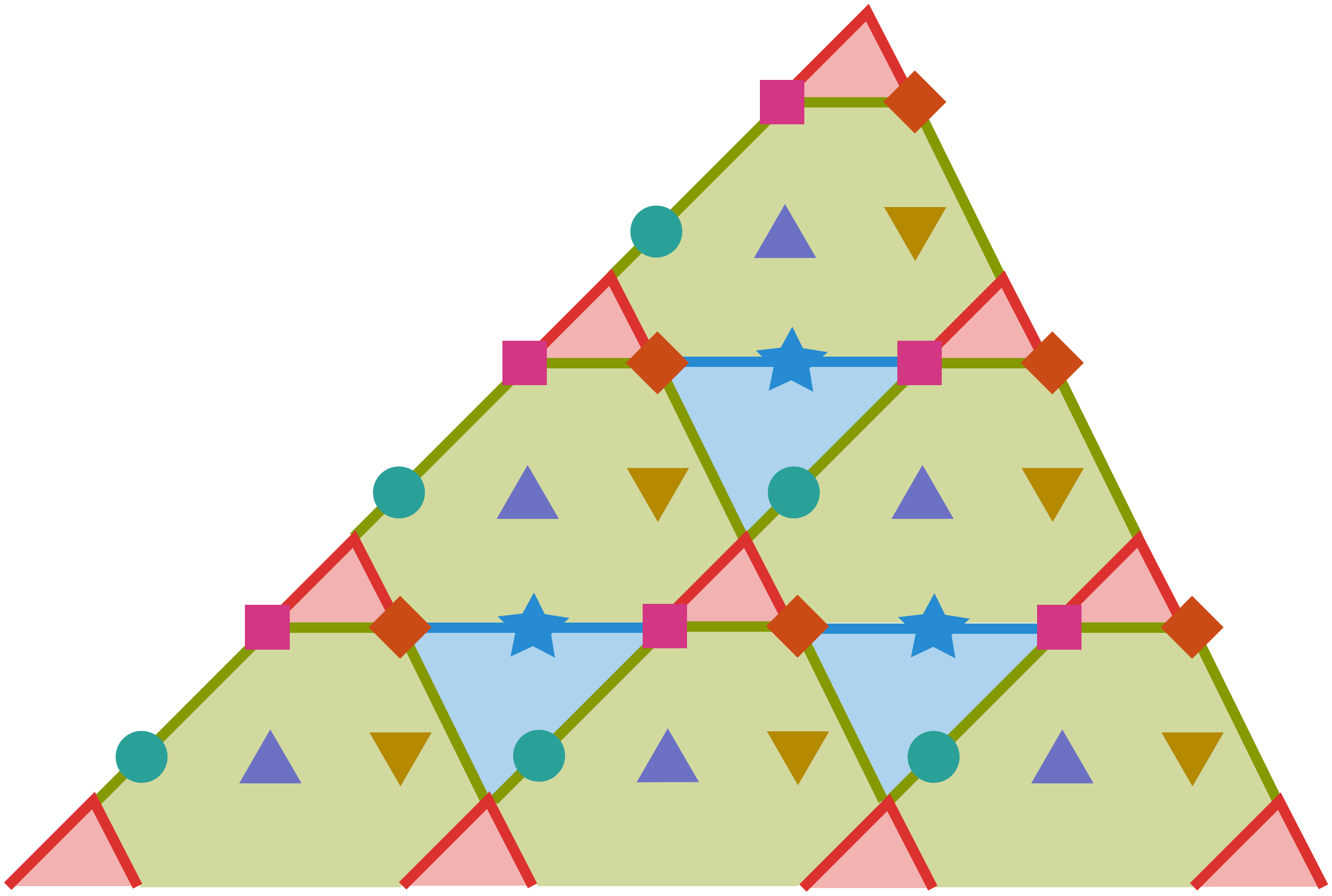}
   \caption{\label{fig:P20} $\PPP(20,3)$, $r=2$.}
  \end{subfigure}

  \begin{subfigure}{0.32\textwidth}
  \centering
   \includegraphics[angle=270,width=4.2cm]{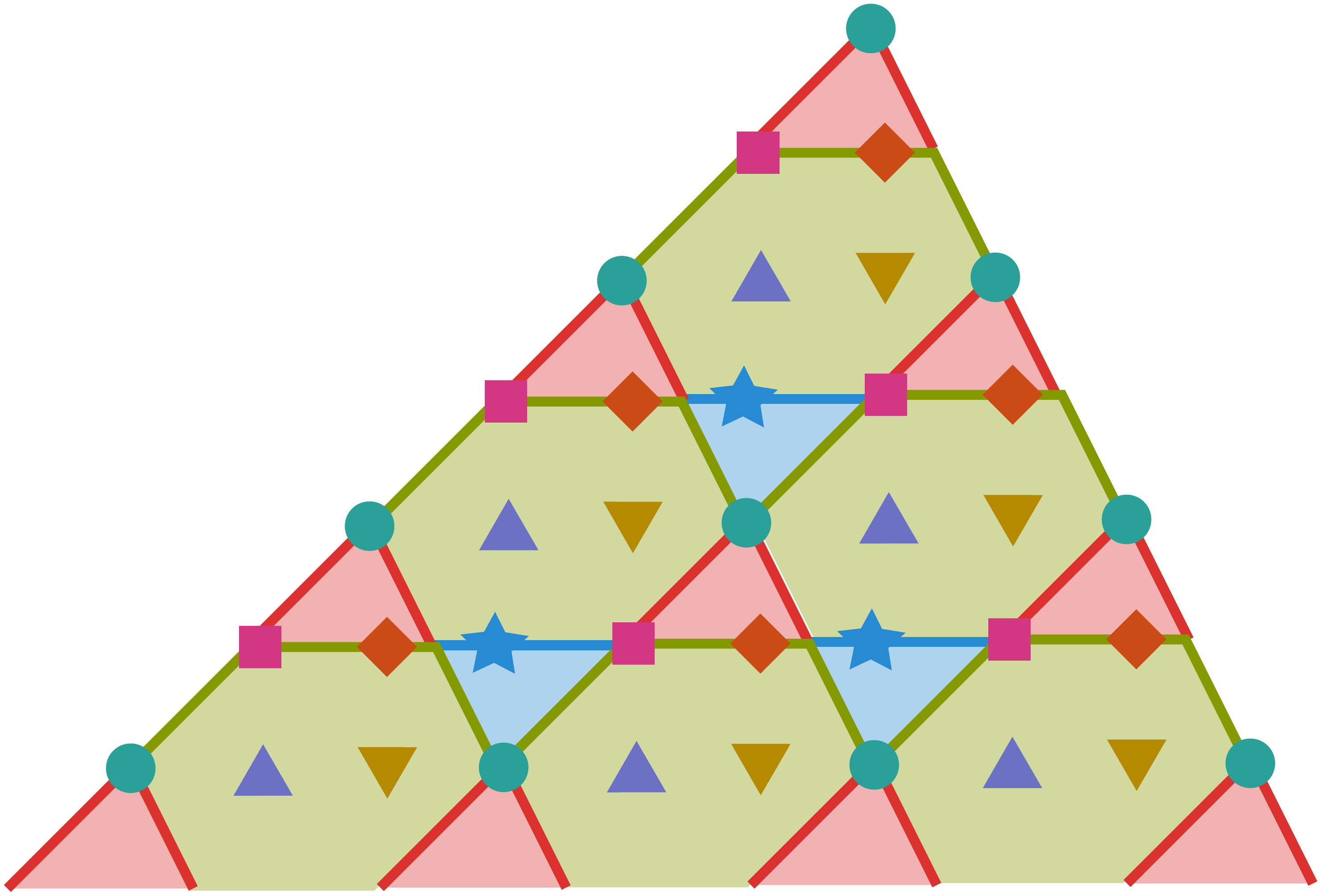}
   \caption{\label{fig:P21} $\PPP(21,3)$, $r=3$.}
  \end{subfigure}
  \begin{subfigure}{0.32\textwidth}
  \centering
   \includegraphics[angle=270,width=4.4cm]{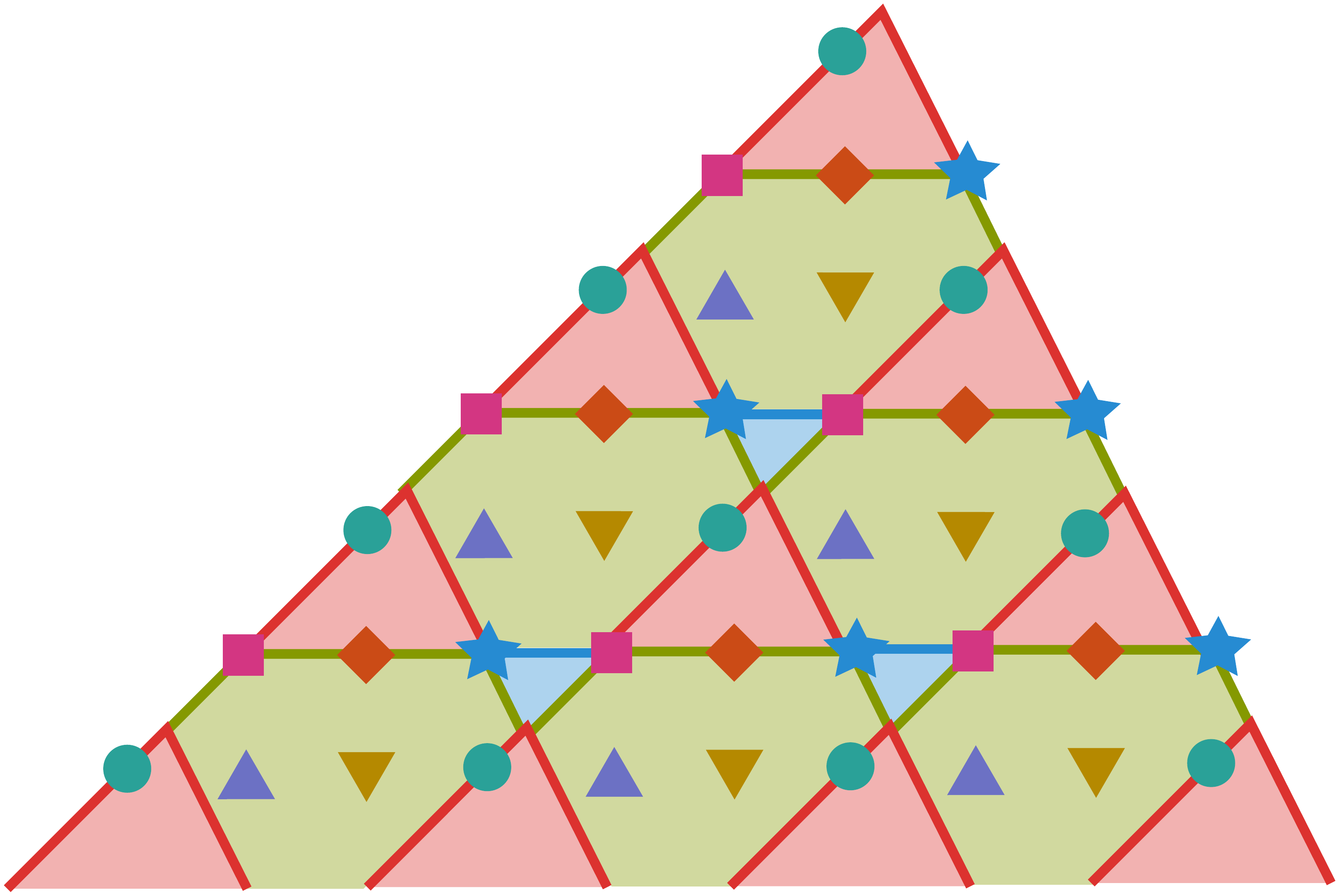}
   \caption{\label{fig:P22} $\PPP(22,3)$, $r=4$. }
  \end{subfigure}
  \begin{subfigure}{0.32\textwidth}
  \centering
   \includegraphics[angle=270,width=4.6cm]{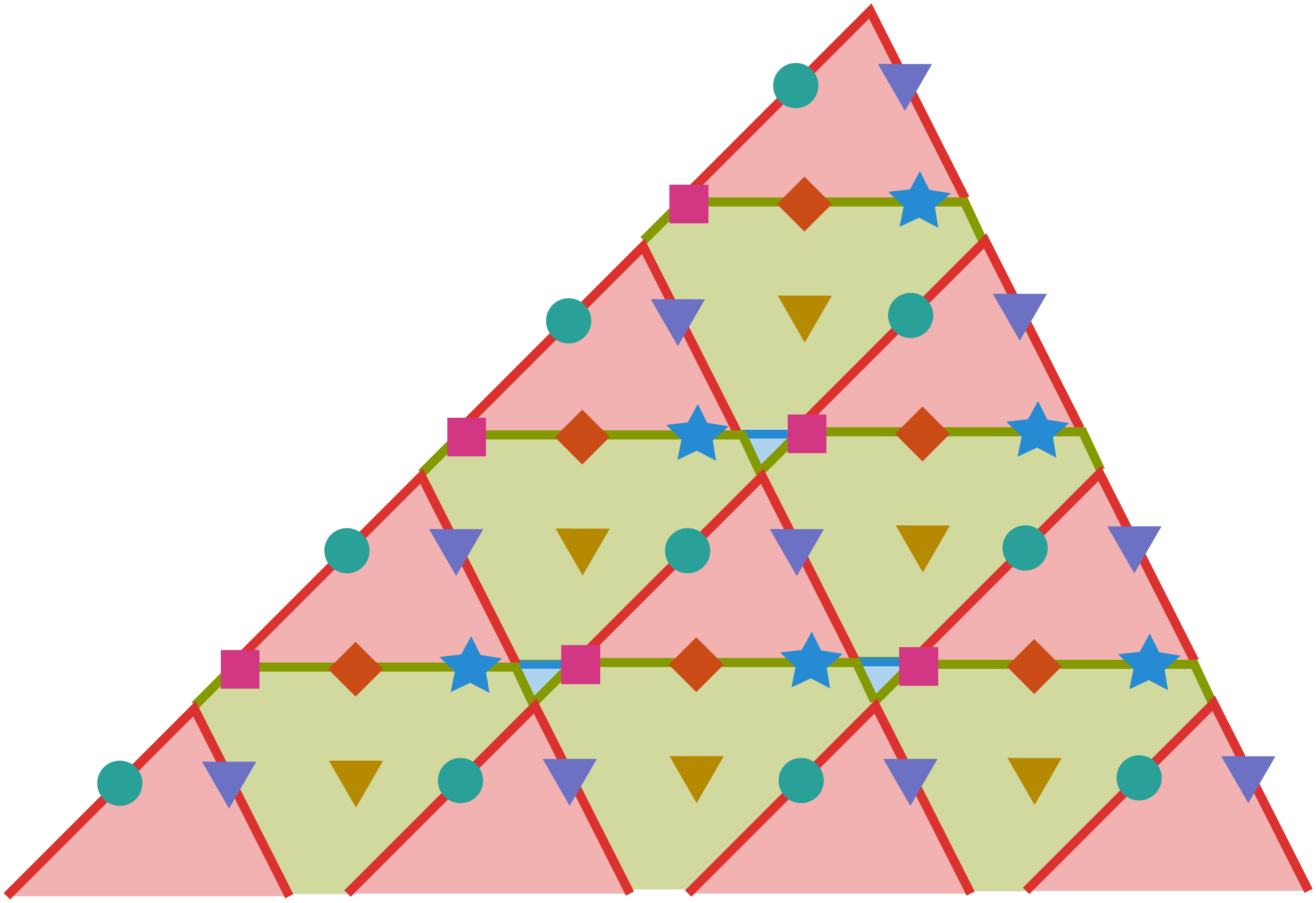}
   \caption{\label{fig:P23} $\PPP(23,3)$, $r=5$.}
  \end{subfigure}
  \caption{ \label{fig:tiling-examples} Tilings of $\PPP(18,3)$ through $\PPP(23,3)$.}
\end{figure*}

Figure~\ref{fig:tiling-examples} shows examples of the tiling given in (\ref{eqn:tiling}) for $\PPP(18,3)$ through $\PPP(23,3)$. Passing from the continuous tiles to lattice point sets yields (\ref{eqn:bijection}).
Counting these lattice points we obtain (\ref{eqn:bijection2}), which is the general form of equations (\ref{p(6k,3)}) through (\ref{p(6k+5,3)}).
This construction shows that the coefficients of (\ref{eqn:bijection2}), hence, (\ref{p(6k,3)}) through (\ref{p(6k+5,3)}) are precisely the lattice points counts in the $\HHH_i$ as is evident in Figure~\ref{fig:master-tile-set}.

Since all unions are disjoint, (\ref{eqn:bijection}) allows us decompose any partition $\lambda\in P(n,3)$ uniquely into a partition (the box remainder) $\mu\in F_3$ in the fundamental parallelepiped and a partition of the form $V_3\tau$, where $\tau\in\Z^3_{\geq 0}$ (the box quotient) is a non-negative integer vector.
For brevity, we will simply say that we decompose $\lambda$ into the pair $(\mu,\tau)$.
This decomposition is vital for our results,
and so we summarize it in the following definition and lemma.
See Figure~\ref{fig:decomposition} for an illustration.

\begin{figure*}[t]
  \begin{center}
  \includegraphics[angle=270,width=14cm]{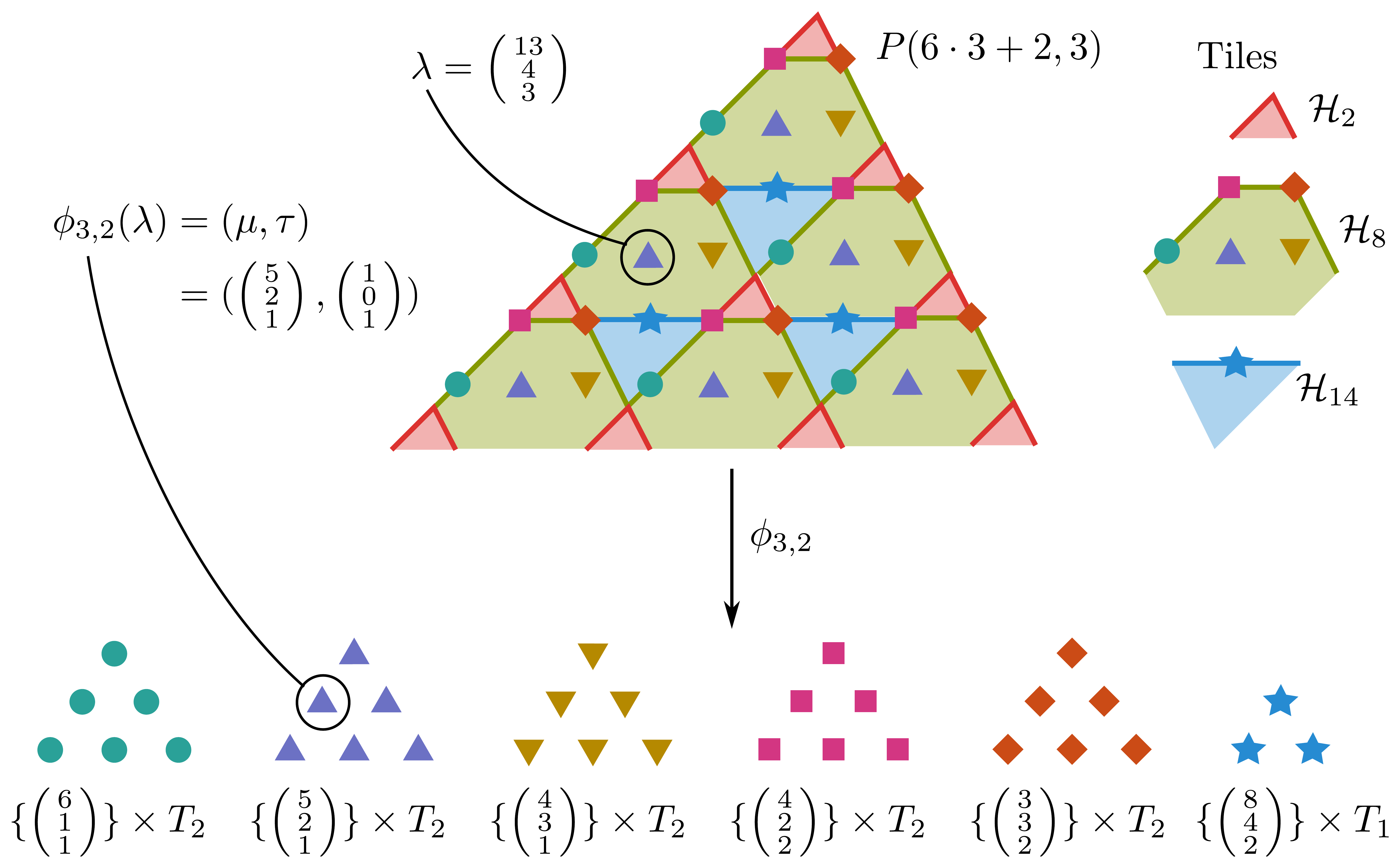}
  \end{center}
  \caption[]{ \label{fig:decomposition} The partition triangle $\PPP(20,3)$ is tiled with certain integer translates of $\HHH_2,\HHH_8,\HHH_{14}$. In other words, the set $P(20,3)$ is decomposed into zero triangles $T_3$ (as $H_2$ is empty), 5 triangles $T_2$ (one for each lattice point in $H_8$) and 1 triangle $T_1$ (one for each lattice point in $H_{14}$).  Each partition $\lambda$ in $P(20,3)$ is decomposed into a fundamental partition $\mu$ and a non-negative integer vector $\tau$ via $\lambda=\mu+V_3\tau$. For example, $\lambda=\msmat{13\\4\\3}=\msmat{5\\2\\1}+V_3 \msmat{1\\0\\1}$ so that $\phi_{3,2}(\lambda)=(\mu,\tau)$ with
  $\mu=\msmat{5\\2\\1}$ and $\tau=\msmat{1\\0\\1}$.}

\end{figure*}

\begin{definition}
\label{def:decomposition}
For all $k\in \Z_{\geq 0}$, $r\in\{0,\ldots,5\}$, and $\lambda \in P(6k+r,3)$, write
\begin{eqnarray}
\label{eqn:box-decomposition}
  \lambda = \mu + V_3\tau \;\;\;\; &\text{ such that }& \;\;\; \mu \in F_3   \text{\ \  and \ }  \tau\in\Z^3_{\geq 0}.
\end{eqnarray}
Define
\begin{eqnarray*}
  \phi_{k,r} \; : \; P(6k+r,3) &\longrightarrow& \left( H_{r} \times T_k \right) \cup \left( H_{6+r} \times T_{k-1} \right) \cup \left( H_{12+r} \times T_{k-2} \right) =: Q_{k,r}\\
\text{by} \qquad  \lambda &\mapsto& (\mu,\tau).
\end{eqnarray*}
We abbreviate $\phi=\bigcup_{k,r}\phi_{k,r}$.
The pair $(\mu,\tau)$ is called the \emph{box decomposition} of $\lambda$.
The partition $\mu\in F_3$ is the \emph{box remainder} and the vector $\tau$ is the \emph{box quotient} of $\lambda$.
\end{definition}

As we have proven above, the crucial property of $\phi_{k,r}$ is the following.

\begin{lemma}
\label{lem:decomposition}
For all $k\in \Z_{\geq 0}$ and $r\in\{0,\ldots,5\}$, the map
\begin{eqnarray*}
  \phi_{k,r} \; : \; P(6k+r,3) &\longrightarrow&  Q_{k,r}
\end{eqnarray*}
is a bijection.
\end{lemma}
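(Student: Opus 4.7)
The plan is to extract the bijectivity of $\phi_{k,r}$ directly from the tiling relation~(\ref{eqn:cone-tiling}), which asserts that $\CCC_3$ decomposes as the disjoint union $\bigcup_{\tau \in \Z^3_{\geq 0}} (V_3\tau + \FFF_3)$. First I would verify that this really is a disjoint cover. Since the columns of $V_3$ are linearly independent, every $\lambda \in \R^3$ has a unique expansion $\lambda = V_3\alpha$, and a component-by-component check of $\lambda = \alpha_1 v_1 + \alpha_2 v_2 + \alpha_3 v_3$ shows that $\lambda \in \CCC_3$ if and only if $\alpha \in \R_{\geq 0}^2 \times \R_{>0}$. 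The asymmetric half-open choice $\FFF_3 = V_3([0,1)^2 \times (0,1])$ is then calibrated so that each such $\alpha$ splits uniquely as $\alpha = \tau + \beta$ with $\tau \in \Z^3_{\geq 0}$ and $\beta \in [0,1)^2 \times (0,1]$ (namely $\tau_i = \lfloor \alpha_i \rfloor$ for $i=1,2$ and $\tau_3 = \lceil \alpha_3 \rceil - 1$). Setting $\mu = V_3\beta$ would give the required decomposition $\lambda = \mu + V_3\tau$ with $\mu \in \FFF_3$ and $\tau \in \Z^3_{\geq 0}$.

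Next I would specialize to lattice points and track heights. If $\lambda \in \Z^3$, then since $V_3\tau$ is integral, the remainder $\mu = \lambda - V_3\tau$ is integral as well, hence $\mu \in F_3$. Every column of $V_3$ has coordinate sum $6$, so $|V_3\tau| = 6|\tau|$ and hence $|\lambda| = |\mu| + 6|\tau|$. Since the lattice points of $\FFF_3$ all lie at heights in $\{0,1,\ldots,17\}$ (as exhibited in Figure~\ref{fig:master-tile-set}), writing $|\lambda| = 6k+r$ with $r \in \{0,\ldots,5\}$ forces $|\mu| \equiv r \pmod 6$, so that $|\mu| \in \{r,6+r,12+r\}$ and correspondingly $|\tau| \in \{k,k-1,k-2\}$. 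This places $(\mu,\tau) \in Q_{k,r}$, so the image of $\phi_{k,r}$ is contained in $Q_{k,r}$.

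The reverse direction is immediate: given any $(\mu,\tau) \in Q_{k,r}$, the vector $\lambda := \mu + V_3\tau$ is integral, lies in $\CCC_3$ by the tiling, and has height $|\mu| + 6|\tau| = 6k+r$ by construction, so $\lambda \in P(6k+r,3)$. The uniqueness from the first step shows that these two constructions are mutually inverse, so $\phi_{k,r}$ is a bijection. I do not anticipate any genuine obstacle; the entire argument is underwritten by the disjoint-tiling property of $\FFF_3$, which in turn hinges on the asymmetric half-open choice of faces. The main point that requires care is that this half-open convention simultaneously encodes the strict inequality $x_3 > 0$ defining $\CCC_3$ and picks exactly one side of each interior facet separating adjacent translates of $\FFF_3$, so that translates meet without overlap and without gaps.
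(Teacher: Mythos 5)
Your proposal is correct and follows essentially the same route as the paper: the authors do not give a separate proof of Lemma~\ref{lem:decomposition} but derive it directly from the disjoint tiling $\CCC_3 = V_3\Z^3_{\geq 0} + \FFF_3$ of (\ref{eqn:cone-tiling}) and the height bookkeeping in (\ref{eqn:tiling})--(\ref{eqn:bijection}), which is exactly the argument you spell out. Your version merely makes explicit the unique splitting $\alpha = \tau + \beta$ coming from the half-open choice of $\FFF_3$, which the paper leaves implicit.
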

As (\ref{eqn:box-decomposition}) suggests, we can think of the box decomposition as a division with remainder. In addition to the example given in Figure~\ref{fig:decomposition}, consider the box decomposition
\[
  \msmat{11\\5\\2} = \msmat{2\\2\\2} + V_3 \msmat{1\\1\\0}.
\]
Even though $\msmat{11\\5\\2} = V_3 \msmat{1\\1\\1}$, we have $\tau = \msmat{1\\1\\0}$ since $\msmat{2\\2\\2}\in F_3$ but $\msmat{0\\0\\0}\not\in F_3$.  The underlying reason is that we are working with partitions with \emph{exactly} three parts. In fact, $\msmat{2\\2\\2}$ is the magenta square in $\HHH_6$ as shown in Figure~\ref{fig:master-tile-set}.

The box decomposition has a very nice combinatorial interpretation in terms of a decomposition of the Ferrers diagram of $\lambda$
into boxes, which is explored in \cite{BreuerEichhornKronholm-forthcoming}. For the purposes of this paper our focus is on the geometric point of view.


\section{Ehrhart cranks for witnessing divisibilities}
\label{sec:Ehrhart-cranks}

In this section, we construct a whole family of combinatorial witnesses for each and every arithmetic progression
of divisibility for $p(n,3)$ modulo any prime $m \equiv -1 \pmod 6$.
The basic strategy is the following:
The decomposition $\phi_{k,r}$ from Definition~\ref{def:decomposition} allows us to divide the set of partitions $P(6k+r,3)$ into $h^*_r$ copies of the triangle $T_k$, $h^*_{6+r}$ copies of the triangle $T_{k-1}$, and $h^*_{12+r}$ copies of the triangle $T_{k-2}$.
Any way in which these triangles can be reassembled into a rectangle
of lattice points wherein the number of lattice points along the width or height of the rectangle is
divisible by $m$ will provide us with cycles proving divisibility and
a crank statistic that witnesses divisibility for that fixed $r$.
Here it is crucial that for a given $r$ there is an arrangement of triangles that works for all $k$, so that we do indeed obtain a crank function that witnesses divisibility for the entire arithmetic progression with a fixed remainder.

We first
give an informal statement of the theorem,
and then in Theorem~\ref{thm:unlabeled-technical}, we give the technical details.

\begin{theorem}\label{InformalTheorem}
For fixed $r'$ and $m$, and any $k'\geq 0$, let $r,k$ be such that $6mk' + r' = 6k+r$. Then, any way $\psi$ of reassembling $h^*_{r}$ triangles $T_k$, $h^*_{r+6}$ triangles $T_{k-1}$, and $h^*_{r+12}$ triangles $T_{k-2}$ into a rectangle $R$ with one side divisible by $m$ defines a crank function $c=\eta\circ\psi\circ\phi$, as illustrated in Figure~\ref{fig:unlabeled-2m-2}, that witnesses the congruence
\begin{eqnarray}
  \label{eqn:unlabeled-congruence-motivation}
  p(6mk'+r',3) \equiv 0 \pmod m \text{ for all }k',
\end{eqnarray}
where cycles are given by traversing either the rows or columns of $R$.
\end{theorem}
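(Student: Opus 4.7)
The plan is to stack three bijections in sequence---the box decomposition $\phi$ from \rlem{lem:decomposition}, the given rearrangement $\psi$, and the cyclic shift on a rectangle---and show that the composition $c=\eta\circ\psi\circ\phi$ is a crank. Writing $n=6mk'+r'=6k+r$, \rlem{lem:decomposition} provides the bijection $\phi_{k,r}\colon P(n,3)\to Q_{k,r}$. By the hypothesis on $\psi$, I may view $\psi$ as a bijection from $Q_{k,r}$ onto the lattice points $R\cap\Z^2$ of a rectangle $R=[0,a)\times[0,b)$ satisfying $m\mid a$ (the case $m\mid b$ is symmetric, using columns in place of rows). I would then set $\eta(x,y)=x\bmod m$, so that $c=\eta\circ\psi\circ\phi\colon P(n,3)\to\{0,1,\dots,m-1\}$ is the candidate crank.

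Next, I would verify that $c$ partitions $P(n,3)$ into $m$ equinumerous classes $S_i=c^{-1}(i)$. Because $m\mid a$ and $\psi\circ\phi$ is a bijection, each fiber of $\eta$ over $\{0,\dots,m-1\}$ has cardinality $(a/m)\cdot b$, independent of $i$; therefore every class $S_i$ has size $ab/m=p(n,3)/m$. This step already yields (\ref{eqn:unlabeled-congruence-motivation}) as a byproduct, since the mere existence of $\psi$ forces $m\mid |Q_{k,r}|=p(n,3)$.

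For the cycles, I would define $\pi_R\colon R\cap\Z^2\to R\cap\Z^2$ by $\pi_R(x,y)=(x+1\bmod a,\,y)$ and conjugate back via $\pi=(\psi\circ\phi)^{-1}\circ\pi_R\circ(\psi\circ\phi)$. The shift $\pi_R$ manifestly sends $\eta^{-1}(i)$ bijectively onto $\eta^{-1}(i+1\bmod m)$, and conjugation by the bijection $\psi\circ\phi$ transports this into a bijection $\pi|_{S_i}\colon S_i\to S_{i+1\bmod m}$ for every $i$, which is exactly the crank-and-cycles structure demanded by the definitions in Section~3. Iterating $\pi$ then traces out the cycles obtained by walking row-by-row along $R$.

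I do not expect serious technical difficulty here; the main interpretive obstacle is recognizing that a ``reassembly of $h^*_{r}$ copies of $T_k$, $h^*_{r+6}$ copies of $T_{k-1}$, and $h^*_{r+12}$ copies of $T_{k-2}$ into $R$'' is nothing more than a bijection $\psi\colon Q_{k,r}\to R\cap\Z^2$, with no hidden geometric compatibility condition. Once this identification is unpacked, the argument reduces to a purely formal chase of bijections, resting on the elementary fact that the cyclic shift on any row of $R$ of length divisible by $m$ cyclically permutes the residue classes of the first coordinate.
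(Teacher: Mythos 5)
Your proposal is correct and follows essentially the same route as the paper: compose the box-decomposition bijection $\phi_{k,r}$ from \rlem{lem:decomposition} with the rearrangement $\psi$ and the reduction $\eta(x,y)=x \bmod m$, count fibers using the side divisible by $m$, and obtain cycles by conjugating the cyclic row-shift on $R$ back through $(\psi\circ\phi)^{-1}$ --- exactly the structure the paper formalizes in Theorem~\ref{thm:unlabeled-technical}. The only nuance worth noting is that the paper additionally insists the arrangement $\psi$ (built from affine maps $\psi_\mu$) be uniform in $k'$, so that a single crank formula witnesses the whole arithmetic progression rather than one $n$ at a time.
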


\begin{figure*}[t]

  \begin{center}
  \includegraphics[angle=0,width=12cm]{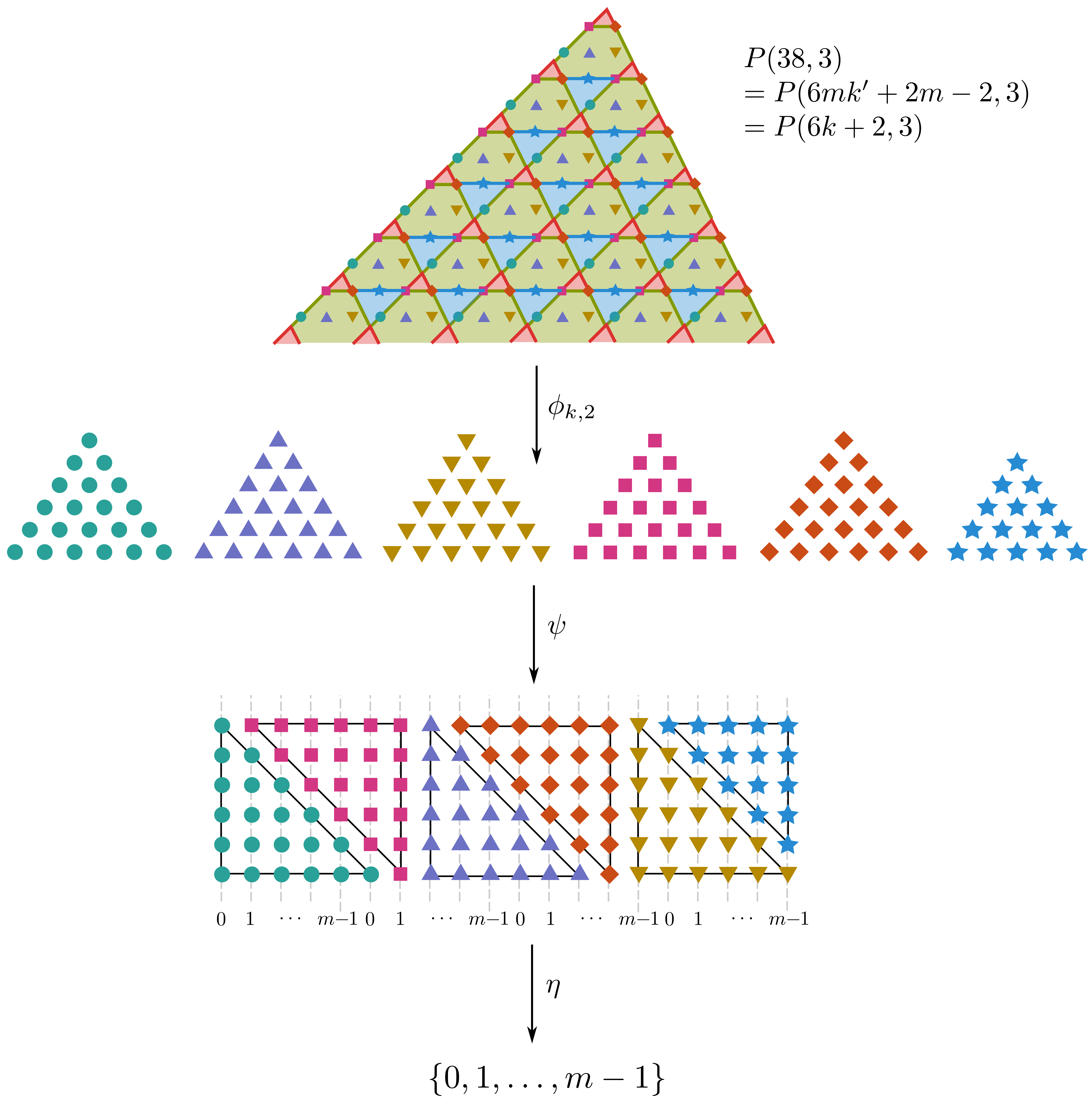}
  \end{center}

  \caption{ \label{fig:unlabeled-2m-2} The construction of an Ehrhart crank $c=\eta\circ\psi\circ\phi$ for the case $r'=2m-2$. The lattice point sets are shown for $m=5, k'=1, k=mk'+\frac{m-2}{3}=6$. However, the rectangle arrangement given by $\psi$ is independent of these parameters.}
\end{figure*}

To work our way up to a precise technical statement of this result, let us consider the case of remainder $r'=2m -2$ as an example, as shown in Figure~\ref{fig:unlabeled-2m-2}. Our goal is to witness (\ref{eqn:unlabeled-congruence-motivation}). To begin, $\phi$ is given by the decomposition from Section~\ref{sec:ehrhart}. To this end, we rewrite
\[
  6mk' + 2m -2 = 6\left(mk'+\frac{m-2}{3}\right) + 2,
\]
and let $K=\mset{mk' + \frac{m-2}{3}}{k' \geq 0}$.
By (\ref{eqn:bijection}) and (\ref{eqn:bijection2}), we have for all $k\in K$,
\begin{eqnarray*}
  p(6k+2,3) &=& 0 \binom{k+2}{2} + 5 \binom{k+1}{2} + 1 \binom{k}{2} \qquad \text{and} \\
  P(6k+2,3) &=& (H_2 + V_3 T_k) \cup (H_8 + V_3 T_{k-1}) \cup (H_{14} + V_3 T_{k-2}),
\end{eqnarray*}
where $H_2=\emptyset$, $H_8=\{\mu_{8,1},\mu_{8,2},\mu_{8,3},\mu_{8,4},\mu_{8,5}\}$, $H_{14}=\{\mu_{14,1}\}$ and
\begin{eqnarray*}
\mu_{8,1} = \msmat{6\\1\\1},
\mu_{8,2} = \msmat{4\\2\\2},
\mu_{8,3} = \msmat{5\\2\\1},
\mu_{8,4} = \msmat{3\\3\\2},
\mu_{8,5} = \msmat{4\\3\\1},
\mu_{14,1} = \msmat{8\\4\\2}.
\end{eqnarray*}
Thus $\phi_{k,2}$ gives a bijection between $P(6k+2,3)$ and $Q_{k,2}$ for all $k\in K$. In Figure~\ref{fig:unlabeled-2m-2}, the set $Q_{k,2}$ is visualized as 6 disjoint triangles, as explained in Section~\ref{sec:ehrhart}.

We now show that the six interlacing triangles in $P(6k+2,3) = (H_2 + V_3 T_k) \cup (H_8 + V_3 T_{k-1}) \cup (H_{14} + V_3 T_{k-2})$ can be rearranged to make a rectangle having at least one side of length a multiple of $m$.
This is done by noting that $Q_{k,2}$, being the six triangles indicated by the binomial coefficients in $ 5 \binom{mk'+\frac{m-2}{3}+1}{2} + 1 \binom{mk'+\frac{m-2}{3}}{2} $, can be arranged into a rectangle $R(k')$ of width $\ell_1(k') = m(3k'+1)$ and height $\ell_2(k') =mk' + \frac{m-2}{3}$.
Formally, we let $\RRR^{\ell_1(k')}_{\ell_2(k')}=[0,\ell_1(k')-1]\times[0,\ell_2(k')-1]$ denote the continuous rectangle,
and write $R(k'):=R^{\ell_1(k')}_{\ell_2(k')} = \Z^2 \cap \RRR^{\ell_1(k')}_{\ell_2(k')}$ for the set of lattice points therein.
Since, in this example, $\ell_1(k')$ is a multiple of $m$ for all $k'$, it follows that $\#R(k')$ is divisible by $m$, and this divisibility is witnessed by the crank
\begin{eqnarray*}
  \eta : \R^2 &\rar& \{0,\ldots,m-1\}\\
    (x,y) &\mapsto& x \mmod m.
\end{eqnarray*}

The cycles that go along with this crank are simply moving along a row of the rectangle. Our strategy will therefore be to reassemble the triangles in $Q_{k,2}$ to form the rectangle $R(k')$.
To this end, assume that for every $i=1,\ldots,6$ we have an affine linear function $\psi_{\mu_i}:\R^3\rar \R^2$ such that for every $s \in \Z_{\geq 1}$, the restriction
$\psi_{\mu_i}|_{T_s}$ is injective, and
\[
  \psi_{\mu_1}(T_{k-1}) \cup \psi_{\mu_2}(T_{k-1}) \cup \psi_{\mu_3}(T_{k-1}) \cup \psi_{\mu_4}(T_{k-1}) \cup \psi_{\mu_5}(T_{k-1}) \cup \psi_{\mu_6}(T_{k-2}) = R(k')
\]
for all $k\in K$, where the unions are disjoint and $k'$ is determined by $k=mk'+\frac{m-2}{3}$.
Then, the function
\begin{eqnarray*}
  \psi : Q_{k,r} &\rar& R(k')\\
    (\mu,\tau) &\mapsto& \psi_{\mu}(\tau)
\end{eqnarray*}
is a bijection. Affine linear functions with this property can be understood by an inspection of Figure~\ref{fig:unlabeled-2m-2}.
We will give explicit formulas for this example below, but first, let us reap the benefits of this construction: the composition $c=\eta\circ\psi\circ\phi_{k,2}$,
\begin{align*}
  P(6mk'+r') \;\; & \xrightarrow{\phi_{k,2}} && Q_{k,2} && \xrightarrow{\psi} && R(k') && \xrightarrow{\eta} && \{0,\ldots,m-1\} \\
  \lambda \;\; & \mapsto && (\mu,\tau) && \mapsto && (x,y) && \mapsto && x \mmod m,
\end{align*}
is a crank function witnessing the congruence (\ref{eqn:unlabeled-congruence-motivation}) for $r'=2m-2$. Cycles are defined by moving along a row of $R(k')$. One important aspect here is that the formulas for $\phi,\psi,\eta$ do not depend on $k$. In this sense, $c$ is one function that witnesses divisibility for the entire congruence (\ref{eqn:unlabeled-congruence-motivation}) for fixed $r'=2m-2$.

The above construction generalizes, which proves the following theorem.

\begin{theorem}
\label{thm:unlabeled-technical}
Let $m\in\Z_{\geq 1}$ and $r'\in\Z_{\geq0}$ be fixed. Let $r\in \{0,\ldots,5\}$ and $k:\Z\rar\Z$ such that $6mk' + r' = 6k(k')+r$ for all $k'\in \Z_{\geq 0}$. Let $K=\mset{k(k')}{k'\in \Z_{\geq0}}$. Let $\ell_1,\ell_2:\Z\rar\Z$ and $R(k')=R^{\ell_1(k')}_{\ell_2(k')}$.
Let $H = H_r \cup H_{r+6} \cup H_{r+12}$.
For each $\mu\in H_{r+6i} \subset H$, let $\psi_\mu:\R^3\rar\R^2$ be an affine linear function such that for every $s \in \Z_{\geq 1}$, the restriction
$\psi_{\mu_i}|_{T_s}$ is injective, and
\[
  \bigcup_{\mu \in H_{r+6i} \subset H} \psi_{\mu}(T_{k(k')-i}) = R(k')
\]
for all $k'\in \Z_{\geq 0}$, where the union is disjoint. Let $\psi:Q_{k(k'),r}\rar R(k')$ be defined by $\psi(\mu,\tau)=\psi_\mu(\tau)$. Let $\eta:\R^2\rar\{0,\ldots,m-1\}$ be a function of the form $\eta(x,y)=\alpha x + \beta y + \gamma \mmod m$ for $\alpha,\beta,\gamma\in\Z$. If at least one of the following two conditions hold:
\begin{enumerate}[label=(\roman*), ref=(\roman*)]
\item \label{enum:unlabeled-width} $\gcd(\alpha,m)=1$ and $m|\ell_1(k')$ for all $k'$, or
\item \label{enum:unlabeled-height} $\gcd(\beta,m)=1$ and $m|\ell_2(k')$ for all $k'$,
\end{enumerate}
then the composition $c=\eta\circ\psi\circ\phi$ is a crank function witnessing the congruence
\[
  p(6mk'+r',3) \equiv 0 \pmod m.
\]
In case \ref{enum:unlabeled-width}, let $\delta=\msmat{1\\0}$. In case \ref{enum:unlabeled-height}, let $\delta=\msmat{0\\1}$. Cycles of length $\ell_1(k')$ or $\ell_2(k')$, respectively, are then given by the operation
\[
  \lambda \longmapsto (\psi\circ\phi)^{-1}((\psi\circ\phi)(\lambda) + \delta),
\]
where the addition is taken modulo $\ell_1(k')$ or $\ell_2(k')$, respectively.
\end{theorem}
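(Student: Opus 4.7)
The plan is to decompose the verification into three parts: $\phi$ is a bijection by Lemma \ref{lem:decomposition}, $\psi$ is a bijection by the hypotheses on the $\psi_\mu$, and $\eta$ distributes $R(k')$ equally among its $m$ residue classes under either condition \ref{enum:unlabeled-width} or \ref{enum:unlabeled-height}. Composing the three gives the crank, and pulling back the translation by $\delta$ on $R(k')$ yields the required cycles.

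First I would verify that $\psi\circ\phi:P(6mk'+r',3)\to R(k')$ is a bijection for every $k'\in\Z_{\geq 0}$. The hypotheses on each $\psi_\mu$ say that its restriction to $T_s$ is injective for all $s\geq 1$ and that the images $\psi_\mu(T_{k(k')-i})$ for $\mu\in H_{r+6i}$ partition $R(k')$ disjointly, so the aggregate $\psi(\mu,\tau):=\psi_\mu(\tau)$ is a bijection $Q_{k(k'),r}\to R(k')$. Chaining with the bijection $\phi_{k(k'),r}$ from Lemma \ref{lem:decomposition} then makes $\psi\circ\phi$ a bijection, which in particular forces $p(6mk'+r',3)=\#R(k')=\ell_1(k')\,\ell_2(k')$.

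Next I would establish the equidistribution of $\eta$ on $R(k')$. In case \ref{enum:unlabeled-width}, for any fixed $y\in\{0,\ldots,\ell_2(k')-1\}$ the values $\alpha x+\beta y+\gamma \pmod m$ as $x$ ranges over $\{0,\ldots,\ell_1(k')-1\}$ split into $\ell_1(k')/m$ consecutive blocks of length $m$; on each block the map $x\mapsto\alpha x\pmod m$ is a bijection of $\Z/m\Z$ because $\gcd(\alpha,m)=1$, so each residue class modulo $m$ is hit exactly $\ell_1(k')/m$ times. Summing over $y$, each fiber of $\eta$ over $R(k')$ has exactly $\ell_1(k')\ell_2(k')/m$ elements, so $c=\eta\circ\psi\circ\phi$ partitions $P(6mk'+r',3)$ into $m$ equinumerous classes and $m\mid p(6mk'+r',3)$. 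Case \ref{enum:unlabeled-height} is symmetric upon interchanging the roles of the two coordinates.

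Finally I would construct the cycle permutation. Let $\sigma:R(k')\to R(k')$ be the coordinatewise shift $(x,y)\mapsto(x+\delta_1,y+\delta_2)$ with the relevant coordinate taken modulo $\ell_1(k')$ or $\ell_2(k')$. Because $m$ divides that side length, reduction modulo the side length agrees with reduction modulo $m$ inside $\eta$, so $\eta\circ\sigma\equiv\eta+\alpha$ in case \ref{enum:unlabeled-width} and $\eta\circ\sigma\equiv\eta+\beta$ in case \ref{enum:unlabeled-height}. The pullback $\pi=(\psi\circ\phi)^{-1}\circ\sigma\circ(\psi\circ\phi)$ is therefore a permutation of $P(6mk'+r',3)$ whose orbits have length $\ell_1(k')$ or $\ell_2(k')$ and which shifts the crank value by a unit in $\Z/m\Z$; since this unit is coprime to $m$, the orbits visit each class $S_i$ the same number of times, giving the explicit bijections that witness the congruence (after the harmless relabeling of classes by multiplication by the inverse of $\alpha$ or $\beta$ when that coefficient differs from $1$). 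The main obstacle I anticipate is the bookkeeping needed to confirm that the hypotheses on the $\psi_\mu$ and on the parametrization $k=k(k')$ are preserved uniformly in $k'$; the combinatorial substance is entirely contained in Lemma \ref{lem:decomposition}, and everything downstream reduces to the elementary arithmetic above.
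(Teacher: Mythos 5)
Your proposal is correct and follows essentially the same route as the paper: the paper establishes this theorem by working out the representative case $r'=2m-2$ in full (the bijection $\phi$ from Lemma~\ref{lem:decomposition}, the reassembly $\psi$ onto the rectangle $R(k')$, the reduction $\eta$, and the row-traversal cycles) and then observes that the construction generalizes. Your write-up simply carries out that generalization explicitly --- verifying that $\psi\circ\phi$ is a bijection, that $\eta$ equidistributes over $R(k')$ under condition (i) or (ii), and that the conjugated shift by $\delta$ advances the crank value by a unit modulo $m$ --- which is exactly the content the paper leaves implicit.
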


Such rectangles $R$ and triangle arrangements $\psi$ exist for all the cases covered in Proposition~\ref{6j-1prop}. We call crank functions of the form $c=\eta\circ\psi\circ\phi$ as given in Theorem~\ref{thm:unlabeled-technical} \emph{Ehrhart cranks}.

\begin{lemma}
  \label{lem:unlabeled-arrangements}
  For all $r'\in\{\pm0,1,2,2m-2,2m+1\}$ there exist lengths $\ell_1,\ell_2$ and affine linear functions $\psi_\mu$ satisfying the conditions of Theorem~\ref{thm:unlabeled-technical}.
\end{lemma}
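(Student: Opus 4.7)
The plan is to prove Lemma~\ref{lem:unlabeled-arrangements} by explicit geometric construction, carrying out a short case analysis over the nine residues $r'\in\{0,\pm 1,\pm 2,\pm(2m-2),\pm(2m+1)\}$ identified in Proposition~\ref{6j-1prop}. The case $r'=2m-2$ is already worked out in the discussion preceding the lemma statement and serves as a template for the remaining cases.

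The geometric heart of the argument is two elementary tiling identities for the triangular arrays $T_s=\{\tau\in\Z^3_{\geq 0}:|\tau|=s\}$. First, by $2\binom{s+2}{2}=(s+1)(s+2)$, two copies of $T_s$ (one upright, one reflected) tile an $(s+1)\times(s+2)$ rectangle. Second, by $\binom{s+2}{2}+\binom{s+1}{2}=(s+1)^2$, one copy of $T_s$ together with one reflected copy of $T_{s-1}$ placed in the opposite corner tile an $(s+1)\times(s+1)$ square. Both tilings are realized by affine linear maps $\psi_\mu(\tau)=A_\mu\tau+b_\mu$ whose linear parts $A_\mu$ are fixed integer $2\times 3$ matrices (projecting the plane $\{|\tau|=s\}$ bijectively to $\R^2$) and whose offsets $b_\mu$ depend only on $\mu$, not on $s$. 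This uniformity in $s$ is precisely what allows the construction to assemble rectangles of different sizes simultaneously for every $k'$.

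For each $r'$ one reads off the triple $(h^*_r,h^*_{r+6},h^*_{r+12})$ from equations (\ref{p(6k,3)})--(\ref{p(6k+5,3)}) and solves $6mk'+r'=6k(k')+r$ for $r\in\{0,\ldots,5\}$. The $h^*_{r+6i}$ copies of $T_{k-i}$ are then grouped into rectangular sub-blocks of common edge length (either $k$ or $k+1$) via the two building blocks above and concatenated along this common edge to form $R(k')$. For instance, $r'=0$ yields $(0,3,3)$ with $k=mk'$, so three $T_{k-1}+T_{k-2}$ pairs give three $mk'\times mk'$ squares stacked into $mk'\times 3mk'$. The case $r'=2m+1$ yields $(2,4,0)$ with $k=mk'+2j-1$, so the two $T_k$'s form one $(k+1)\times(k+2)$ block and the four $T_{k-1}$'s form two $k\times(k+1)$ blocks, concatenated along height $k+1$ into $(k+1)\times(3k+2)=(k+1)\times m(3k'+1)$. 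The remaining cases are analogous: $r'=1,2$ give $k\times(3k+1)$ and $k\times(3k+2)$ with $k=mk'$; $r'=-1,-2$ give $(k+1)\times(3k+2)$ and $(k+1)\times(3k+1)$ with $k=m(k'+1)-1$; and the two cases $r'=-(2m-2),-(2m+1)$ produce rectangles in which one side equals $m(3k'+2)$. In every case a one-line congruence using $m=6j-1$ confirms the required divisibility of one side by $m$.

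The main obstacle is purely bookkeeping: one must enumerate the at most six tiles $\mu\in H_r\cup H_{r+6}\cup H_{r+12}$ visible in Figure~\ref{fig:master-tile-set} and assign each a specific $\psi_\mu$ so that the sub-rectangles tile $R(k')$ disjointly. Injectivity of each $\psi_\mu|_{T_s}$ follows at once from the fact that $A_\mu$ projects $\{|\tau|=s\}$ bijectively to $\R^2$, so verification reduces to the finite check that the explicit placements produce disjoint images whose union covers $R(k')$. Theorem~\ref{thm:unlabeled-technical} then applies with $\eta(x,y)=x\mmod m$ or $\eta(x,y)=y\mmod m$ according to which side of $R(k')$ is divisible by $m$.
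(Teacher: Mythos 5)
Your proposal is correct and follows essentially the same route as the paper: a case-by-case construction over the nine residues, reading off $(h^*_r,h^*_{r+6},h^*_{r+12})$ and $k,r,\ell_1,\ell_2$ (your values agree with the paper's Table~\ref{tab:unlabeled} up to a harmless reindexing of $k'$ in the negative cases), and assembling the triangles into rectangles via the two tiling identities $2\binom{s+2}{2}=(s+1)(s+2)$ and $\binom{s+2}{2}+\binom{s+1}{2}=(s+1)^2$, which is exactly the content of the paper's Figure~\ref{fig:unlabeled-cases}. Your observation that the $s$-dependence of the placements can be absorbed into the linear part of $\psi_\mu$ via the coordinate sum $|\tau|$ is precisely how the paper's explicit formulas for $r'=2m-2$ achieve uniformity in $k$.
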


\begin{table}
\begin{center}
\begin{tabular}{ l | l | l | l | l | l | l | l}
$r'$       &  $k$                   & $r$ & $\ell_1$   & $\ell_2$              & $h^*_r$ & $h^*_{r+6}$ & $h^*_{r+12}$ \\
\hline
$-(2m+1)$  &   $mk'-\frac{m+1}{3}$      &  1   &   $m(3k'-1)$   &    $mk'-\frac{m+1}{3}$   &    0   &    4   & 2           \\
$-(2m-2)$  &   $mk'-\frac{m+1}{3}$     &   4  &    $m(3k'-1)$        &     $mk'-\frac{m-2}{3}$ &   1  &  5   &   0         \\
$-2$       &      $mk'-1$    &   4  &      $3mk'-2$      &    $mk'$      &    1   &      5     &     0       \\
$-1$       &   $mk'-1$    &   5  & $3mk'-1$ &    $mk'$    & 2      &    4       &     0       \\
$0$        &       $mk'$       &  0   &  $3mk'$    &   $mk'$       &  0     &     3      &  3          \\
$1$        &         $mk'$     &  1   &   $3mk'+1$   &    $mk'$ &   0    &          4 &     2       \\
$2$        &     $mk'$   &  2   &   $3mk'+2$   &   $mk'$   &    0   &        5   &      1      \\
$2m-2$     &  $mk' + \frac{m-2}{3}$ & $2$ & $m(3k'+1)$ & $mk' + \frac{m-2}{3}$ & 0   & 5       & 1        \\
$2m+1$     &   $mk' + \frac{m-2}{3}$   &  5   &  $m(3k'+1)$  &   $mk'+\frac{m+1}{3}$  &  2     &     4      & 0
\end{tabular}
\end{center}

\caption{\label{tab:unlabeled} Parameters of Theorem~\ref{thm:unlabeled-technical} for every $r'\in\{\pm0,1,2,2m-2,2m+1\}$.}
\end{table}

\begin{figure*}[t]

  \begin{center}
  \includegraphics[angle=270,width=14.5cm]{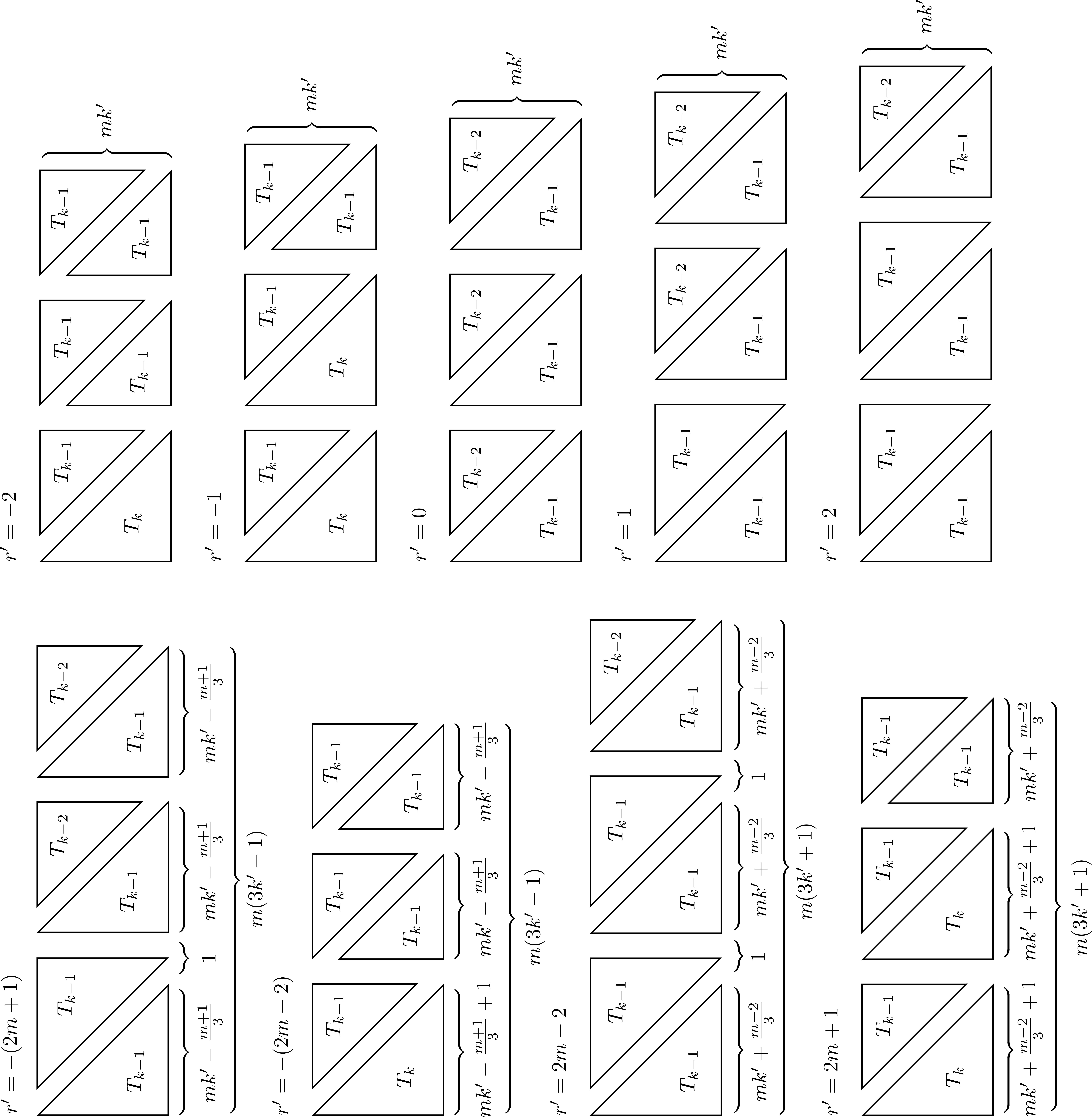}
  \end{center}

  \caption{ \label{fig:unlabeled-cases} The different triangle arrangements $\psi$ for every $r'\in\{\pm0,1,2,2m-2,2m+1\}$.}
\end{figure*}

\begin{proof}
  For every $r'$, the values of $k,r,\ell_1,\ell_2$ and $h^*_{r},h^*_{r+6},h^*_{r+12}$ are given in Table~\ref{tab:unlabeled} (see also Figure~\ref{fig:master-tile-set}).
  We work with $\eta(x,y) = x \mmod m$ in the cases $r'\in{\pm(2m-2),\pm(2m+1)}$ as here the width of $R$ is divisible by $m$, and with $\eta(x,y) = y \mmod m$ in the cases $r\in\{0,\pm1,\pm2\}$ as here the height of $R$ is divisible by $m$. The triangle arrangements $\psi$ are given in Figure~\ref{fig:unlabeled-cases}.
\end{proof}

\begin{corollary}
There exist Ehrhart cranks with explicit cycles providing combinatorial witnesses for the congruences of $p(n,3)$ as characterized by Proposition~\ref{6j-1prop}.
\end{corollary}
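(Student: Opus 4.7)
The plan is to combine Lemma~\ref{lem:unlabeled-arrangements} with Theorem~\ref{thm:unlabeled-technical} by a straightforward case analysis over the residues identified in Proposition~\ref{6j-1prop}. The setup is already in place, so nothing new needs to be invented; the corollary is essentially a bookkeeping step that assembles the components produced in earlier sections.

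First, I would recall from Proposition~\ref{6j-1prop} that for a prime $m = 6j-1$, one has $p(n,3) \equiv 0 \pmod m$ exactly when $n \equiv \pm(0,1,2,2m-2,2m+1) \pmod{6m}$, i.e., for the nine residues $r' \in \{0,\pm 1,\pm 2,\pm(2m-2),\pm(2m+1)\}$. For each such $r'$, Lemma~\ref{lem:unlabeled-arrangements} already provides lengths $\ell_1(k'), \ell_2(k')$ and affine linear functions $\psi_\mu$ (recorded in Table~\ref{tab:unlabeled} and Figure~\ref{fig:unlabeled-cases}) that reassemble the triangles of the image of $\phi$ into the rectangle $R(k')$, uniformly in $k' \in \Z_{\geq 0}$ and satisfying the hypotheses of Theorem~\ref{thm:unlabeled-technical}.

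Next, I would choose $\eta$ according to which side of $R(k')$ is divisible by $m$. For $r' \in \{\pm(2m-2),\pm(2m+1)\}$, Table~\ref{tab:unlabeled} shows that $m \mid \ell_1(k')$, so I take $\eta(x,y) = x \bmod m$ and apply Theorem~\ref{thm:unlabeled-technical}\ref{enum:unlabeled-width} with $\alpha=1$, $\beta=\gamma=0$. For $r' \in \{0,\pm 1,\pm 2\}$, Table~\ref{tab:unlabeled} shows that $m \mid \ell_2(k')$, so I take $\eta(x,y) = y \bmod m$ and apply Theorem~\ref{thm:unlabeled-technical}\ref{enum:unlabeled-height} with $\beta=1$, $\alpha=\gamma=0$. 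In either case, $\gcd(\alpha,m)=1$ or $\gcd(\beta,m)=1$ holds trivially, so Theorem~\ref{thm:unlabeled-technical} yields an Ehrhart crank $c = \eta \circ \psi \circ \phi$ witnessing $p(6mk'+r',3) \equiv 0 \pmod m$, with explicit cycles of length $\ell_1(k')$ or $\ell_2(k')$ given by $\lambda \mapsto (\psi\circ\phi)^{-1}((\psi\circ\phi)(\lambda)+\delta)$ for the appropriate unit vector $\delta$.

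The only point that requires any care is purely bookkeeping: one must verify that the list of residues covered by Lemma~\ref{lem:unlabeled-arrangements} matches exactly the list of residues from Proposition~\ref{6j-1prop}, and that the lengths $\ell_1(k'), \ell_2(k')$ in Table~\ref{tab:unlabeled} are nonnegative integers for all $k' \geq 0$. The latter is where the hypothesis $m \equiv -1 \pmod 6$ enters, since it ensures that $\frac{m-2}{3}$ and $\frac{m+1}{3}$ are integers, making the entries in the table well-defined. Since there is no nontrivial obstacle beyond aligning the residue lists and invoking the two previous results, the corollary follows.
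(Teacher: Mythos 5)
Your proposal is correct and takes essentially the same route as the paper, whose entire proof of this corollary is ``Follows from Theorem~\ref{thm:unlabeled-technical} and Lemma~\ref{lem:unlabeled-arrangements}.'' The extra bookkeeping you supply (matching the residue lists, choosing $\eta(x,y)=x\bmod m$ versus $y\bmod m$ according to which of $\ell_1,\ell_2$ is divisible by $m$, and noting that $m\equiv -1\pmod 6$ makes $\frac{m-2}{3}$ and $\frac{m+1}{3}$ integral) is exactly the content the paper places in the proof of Lemma~\ref{lem:unlabeled-arrangements} and Table~\ref{tab:unlabeled}.
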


\begin{proof}
Follows from Theorem~\ref{thm:unlabeled-technical} and Lemma~\ref{lem:unlabeled-arrangements}.
\end{proof}

Figure~\ref{fig:unlabeled-cases} gives a clear picture of how the maps $\psi_\mu$ are to be constructed. For instructive purposes we work out an explicit formula for $c=\eta\circ\psi\circ\phi$ as given in Theorem~\ref{thm:unlabeled-technical} for the case $r'=2m-2$ below.

The triangle arrangement from Figures~\ref{fig:unlabeled-2m-2} and \ref{fig:labeled-origami-cases-horizontal} is given by the six affine linear functions:
\begin{eqnarray*}
\psi_{\msmat{6\\1\\1}}(\tau) &=& \msmat{0 & 1 & 0 \\ 0 & 0 & 1}\tau \\
\psi_{\msmat{4\\2\\2}}(\tau) &=& \msmat{1 & 1 & 0 \\ 0 & 1 & 1}\tau + \msmat{1\\0}\\
\psi_{\msmat{5\\2\\1}}(\tau) &=& \msmat{0 & 1 & 0 \\ 0 & 0 & 1}\tau + \msmat{1 & 1 & 1 \\ 0 & 0 & 0}\tau + \msmat{2\\0}\\
\psi_{\msmat{3\\3\\2}}(\tau) &=& \msmat{1 & 1 & 0 \\ 0 & 1 & 1}\tau + \msmat{1 & 1 & 1 \\ 0 & 0 & 0}\tau + \msmat{3\\0}\\
\psi_{\msmat{4\\3\\1}}(\tau) &=& \msmat{0 & 1 & 0 \\ 0 & 0 & 1}\tau + \msmat{2 & 2 & 2 \\ 0 & 0 & 0}\tau + \msmat{4\\0}\\
\psi_{\msmat{8\\4\\2}}(\tau) &=& \msmat{1 & 1 & 0 \\ 0 & 1 & 1}\tau + \msmat{2 & 2 & 2 \\ 0 & 0 & 0}\tau + \msmat{5\\1}.
\end{eqnarray*}
Exploiting the pattern evident in these formulas, $\psi(\mu,\tau)$ can be written more compactly as
\begin{eqnarray*}
  \psi(\mu,\tau) =
  \msmat{
    \tau_2 + (\bar{\mu}_3-1)(\tau_1+1) + \bar{\mu}_2(2+\tau_1+\tau_2+\tau_3) \\
    \tau_3 + (\bar{\mu}_3-1)\tau_2 + \frac{\mu_1 + \mu_2 + \mu_3 - 8}{6}
  },
\end{eqnarray*}
where $\bar{\mu}$ is a vector encoding the multiplicities of the columns of the Ferrers diagram of the fundamental partition $\mu$; i.e.,
\[
  \bar{\mu} = \msmat{1&1&1\\0&1&1\\0&0&1}^{-1} \cdot\mu ~~=~~ \msmat{\mu_1 - \mu2 \\ \mu_2 - \mu_3 \\ \mu_3},
\]
which in the case of the triangle arrangement we chose for $r'=2m-2$ gives
\[
\bar{\msmat{6\\1\\1}} = \msmat{5\\0\\1},
\bar{\msmat{4\\2\\2}} = \msmat{2\\0\\2},
\bar{\msmat{5\\2\\1}} = \msmat{3\\1\\1},
\bar{\msmat{3\\3\\2}} = \msmat{0\\1\\2},
\bar{\msmat{4\\3\\1}} = \msmat{1\\2\\1},
\bar{\msmat{8\\4\\2}} = \msmat{4\\2\\2}.
\]
Therefore, the crank function $c=\eta\circ\psi\circ\phi$ that witnesses $p(6mk'+ 2m-2,3) \equiv 0 \pmod m$ is given by the formula
\begin{eqnarray*}
c(\lambda) &=& \tau_2 + (\bar{\mu}_3-1)(\tau_1+1) + \bar{\mu}_2(2+k) \;\;\; \mmod m ,
\end{eqnarray*}
where $\mu$ and $\tau$ are the box remainder and box quotient as introduced in Definition~\ref{def:decomposition}.
From the division with remainder $\lambda = V_3\tau + \mu$, it is easy to derive explicit formulas for $\mu$ and $\tau$,
expressed in terms of the floor and fractional part functions  $\floor{x}$ and $\{x\}$.
In the case $r'=2m-2$, this leads to the following formula for the crank function:
\begin{align*}
c(\lambda) &=& \floor{\frac{\bar{\lambda}_2}{3}}
    + \left( 2\fract{\frac{\bar{\lambda}_3}{2}} + 2\left[\frac{\bar{\lambda}_3}{2}\in\Z\right] - 1 \right)
      \left(\floor{ \frac{ \bar{\lambda}_1 }{ 6 } } + 1 \right)
    + 3\fract{\frac{\bar{\lambda}_2}{3}}(2+k)
    \; \mmod m ,
\end{align*}
where $k=\tau_1+\tau_2+\tau_3 = \floor{\frac{\bar{\lambda}_1}{6}} + \floor{\frac{\bar{\lambda}_2}{3}} + \floor{\frac{\bar{\lambda}_3}{2}}-\left[\frac{\bar{\lambda}_3}{2}\in\Z\right]$ and
$[F]$ denotes the value 1 if the statement $F$ is true and 0 otherwise.

\section{Largest Part Minus Smallest Part is a Supercrank}
\label{labeled origami}

\begin{figure*}[t]

  \begin{center}
  \includegraphics[angle=270,width=14cm]{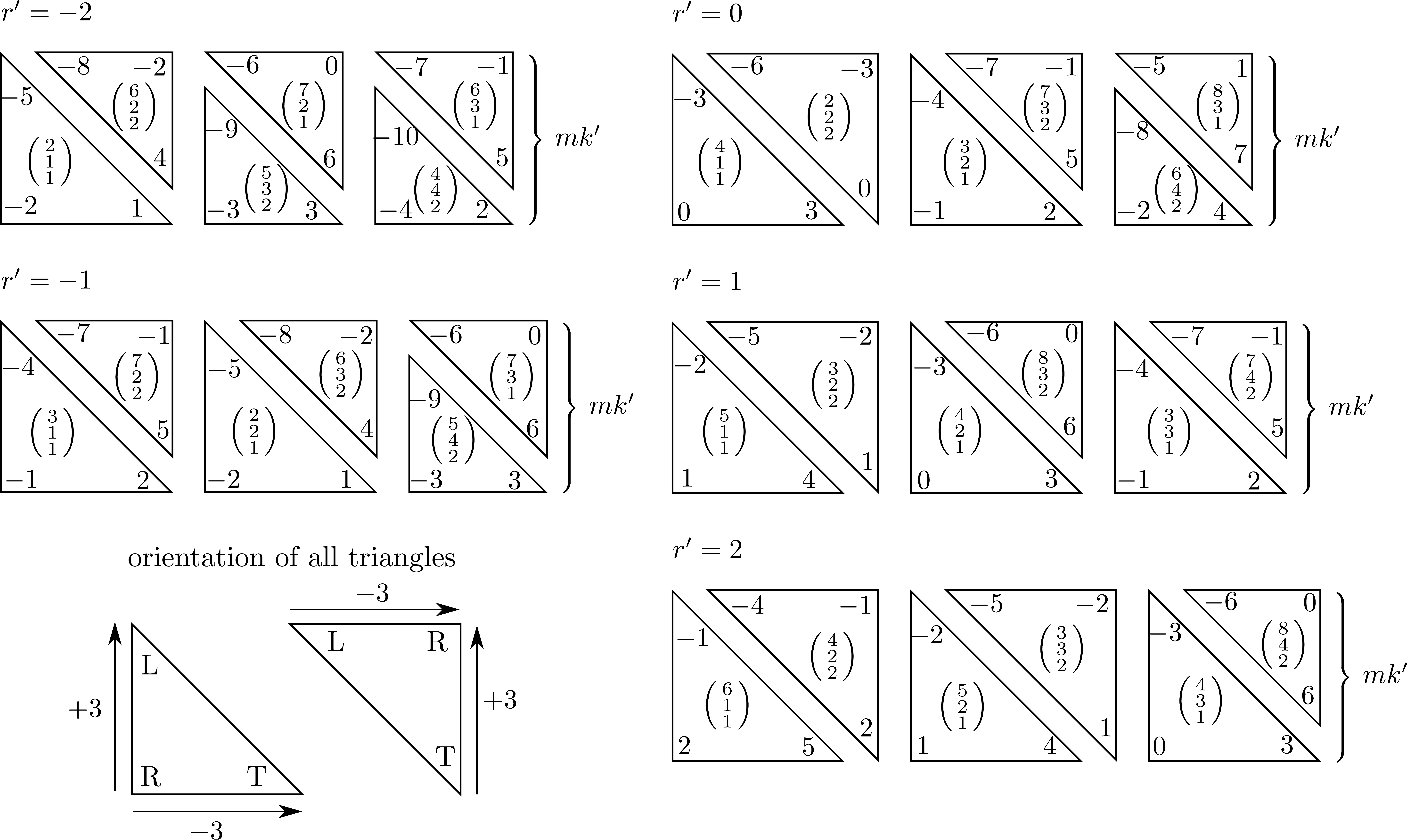}
  \end{center}
  \caption[]{ \label{fig:labeled-origami-cases-vertical} The cases $r'\in\{0,\pm 1,\pm 2\}$. All triangles are oriented as shown in the lower-left of the figure.
   The vertices of each triangle are labeled with their respective crank value modulo $m$. The triangles are labeled by the corresponding $\mu$ vector. In every column of every rectangle the crank value increases by $3$ modulo $m$ at every step, wrapping around at the top, which provides cycles witnessing divisibility. Along each row, the crank value typically decreases by $3$, but there are two jumps in each row; this does not affect the cycles, however.
  }
\end{figure*}

\begin{figure*}[t]

  \begin{center}
  \includegraphics[angle=270,width=14cm]{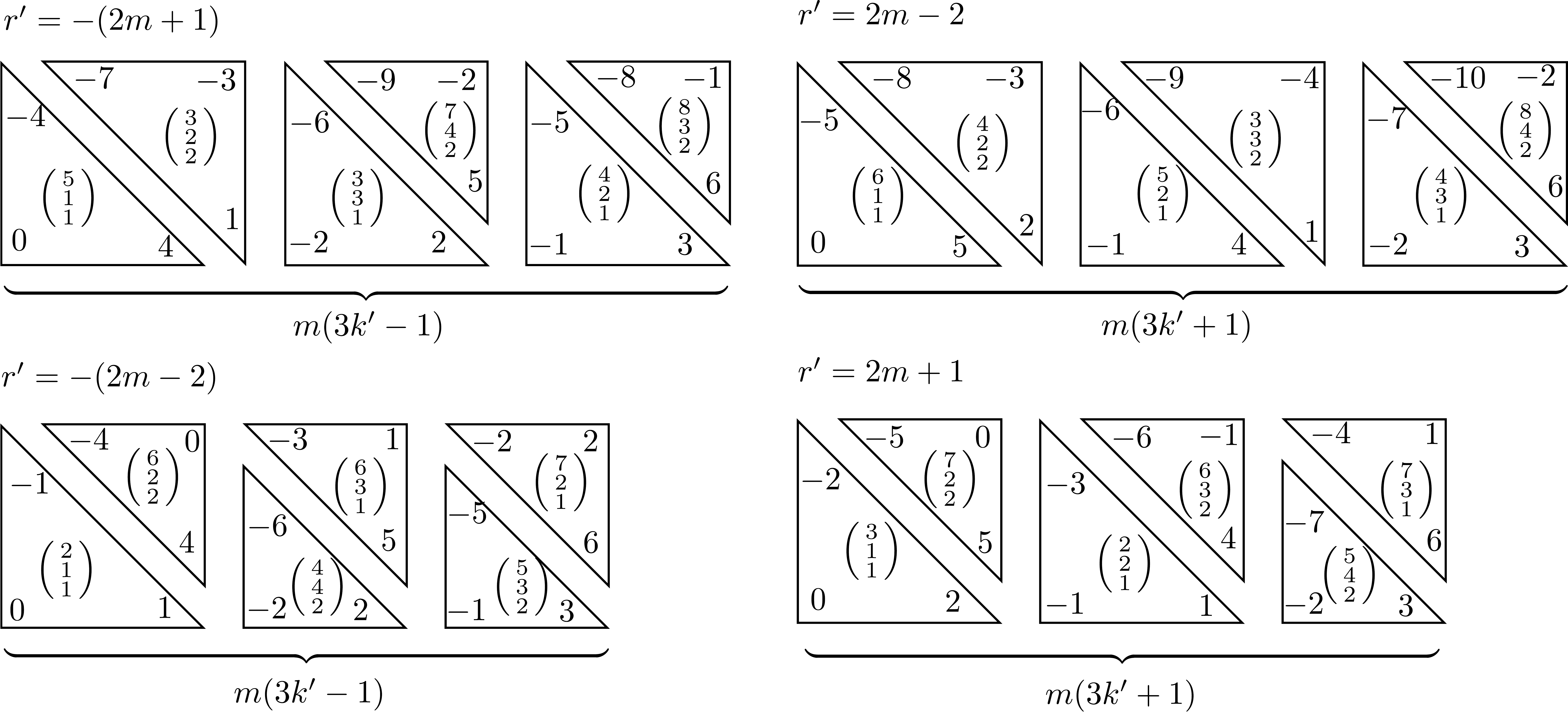}
  \end{center}

  \caption[]{ \label{fig:labeled-origami-cases-horizontal} The cases $r'\in\{\pm(2m-2),\pm (2m+1)\}$. All triangles are oriented as shown in Figure~\ref{fig:labeled-origami-cases-vertical}. The vertices of each triangle are labeled with their respective crank value modulo $m$. The triangles are labeled by their corresponding box remainder, $\mu$. In every row of every rectangle the crank value decreases by $3$ modulo $m$ at every step, wrapping around at the right, which provides cycles witnessing divisibility. Along each column, the crank value increases by $3$ modulo $m$ without wrapping around at the top; this is not relevant for the cycles, however.
  }
\end{figure*}

In this section, we use the methods developed in Sections~\ref{sec:ehrhart} and \ref{sec:Ehrhart-cranks} to prove Theorem~\ref{main theorem}, which we restate here.

\begin{reptheorem}{main theorem}
Let $m=6j-1$ be prime. Then $c_{LS}$, largest part minus smallest part modulo $m$, is a supercrank witnessing $m|p(n,3)$ for each and every $n\in  \Z_{\geq 0}$ for which this divisibility holds, as characterized in Proposition~\ref{6j-1prop}.
\end{reptheorem}

\begin{proof}
For all $n$ characterized in Proposition~\ref{6j-1prop}, we have to show that
$c_{LS}(\lambda)=\lambda_1 -\lambda_3 \mmod m$, witnesses $m|p(n,3)$; i.e.,
\begin{eqnarray}
  \label{eqn:crank-property}
  \#c_{LS}^{-1}(0) = \#c_{LS}^{-1}(1) = \ldots = \#c_{LS}^{-1}(m-1).
\end{eqnarray}
To witness (\ref{eqn:crank-property}) combinatorially, it suffices to find bijections $f_i:c_{LS}^{-1}(i)\rar c_{LS}^{-1}(i+1 \mmod m)$ for all $i=0,\ldots,m-1$.
One way to construct these bijections is to find a single involution $f:P(n,3)\rar P(n,3)$ with the property that $c_{LS}(f(\lambda))=c_{LS}(\lambda)+\delta \mmod m$, where $\delta$ is a constant relatively prime to $m$.
Note that this property implies that on each cycle of the permutation $f$, all crank values appear equally often. For brevity, we say that $f$ \emph{cycles}
the crank values.

The construction of $f$ is summarized in Figures~\ref{fig:labeled-origami-cases-vertical} and
\ref{fig:labeled-origami-cases-horizontal}: We first decompose the relevant sets $P(n,3)$ into triangles
$T_k$ as in Section~\ref{sec:ehrhart}.
Then we reassemble these triangles into rectangles $R(k')$ as in Section~\ref{sec:Ehrhart-cranks}.
This time, however, we take additional care to make sure that values of $c_{LS}$ change by a constant $\delta$ at each step along the columns (cases $r'\in\{0,\pm1,\pm2\}$,
shown in Figure~\ref{fig:labeled-origami-cases-vertical}) or rows (cases $r'\in\{\pm(2m-2),\pm(2m+1)\}$, shown in Figure~\ref{fig:labeled-origami-cases-horizontal}) of the rectangles, wrapping around at the boundary.
The permutation $f$ that cycles the crank values is thus given by taking one step to the right or taking one step up in the
rectangles
given in Figures~\ref{fig:labeled-origami-cases-vertical} and \ref{fig:labeled-origami-cases-horizontal}.
To make these ideas precise, we now go into detail for the case $r'=2m-2$. The other cases are analogous.

Let $n=6mk' + r'$ for $r'=2m-2$. In this case, $k=mk'+\frac{m-2}{3}$ and $r=2$. Given the decomposition $\phi_{k,r}:P(n,3)\rar Q_{k,r}$ defined in Section~\ref{sec:ehrhart} and the triangle arrangement $\psi$ given in Figure~\ref{fig:labeled-origami-cases-horizontal} for this case, we define $f$ by
\begin{eqnarray}
\label{eqn:f}
  f(\lambda) = (\psi\circ\phi_{k,r})^{-1}(\psi\circ\phi_{k,r}(\lambda) + \msmat{1\\0});
\end{eqnarray}
i.e., we get from $\lambda$ to the next partition in the cycle by first finding the position of $\lambda$ in the rectangle $R(k')$, taking one step right in this rectangle, and then making note of the corresponding partition. Moreover, when we reach the right edge of the rectangle, we wrap around and return to left edge of the rectangle in the same line. More precisely, the addition $+ \msmat{1\\0}$ in (\ref{eqn:f}) is to be understood modulo $m(3k'-1)$ in the first coordinate.

\begin{figure*}[t]
  \begin{center}
  \includegraphics[angle=270,width=3cm]{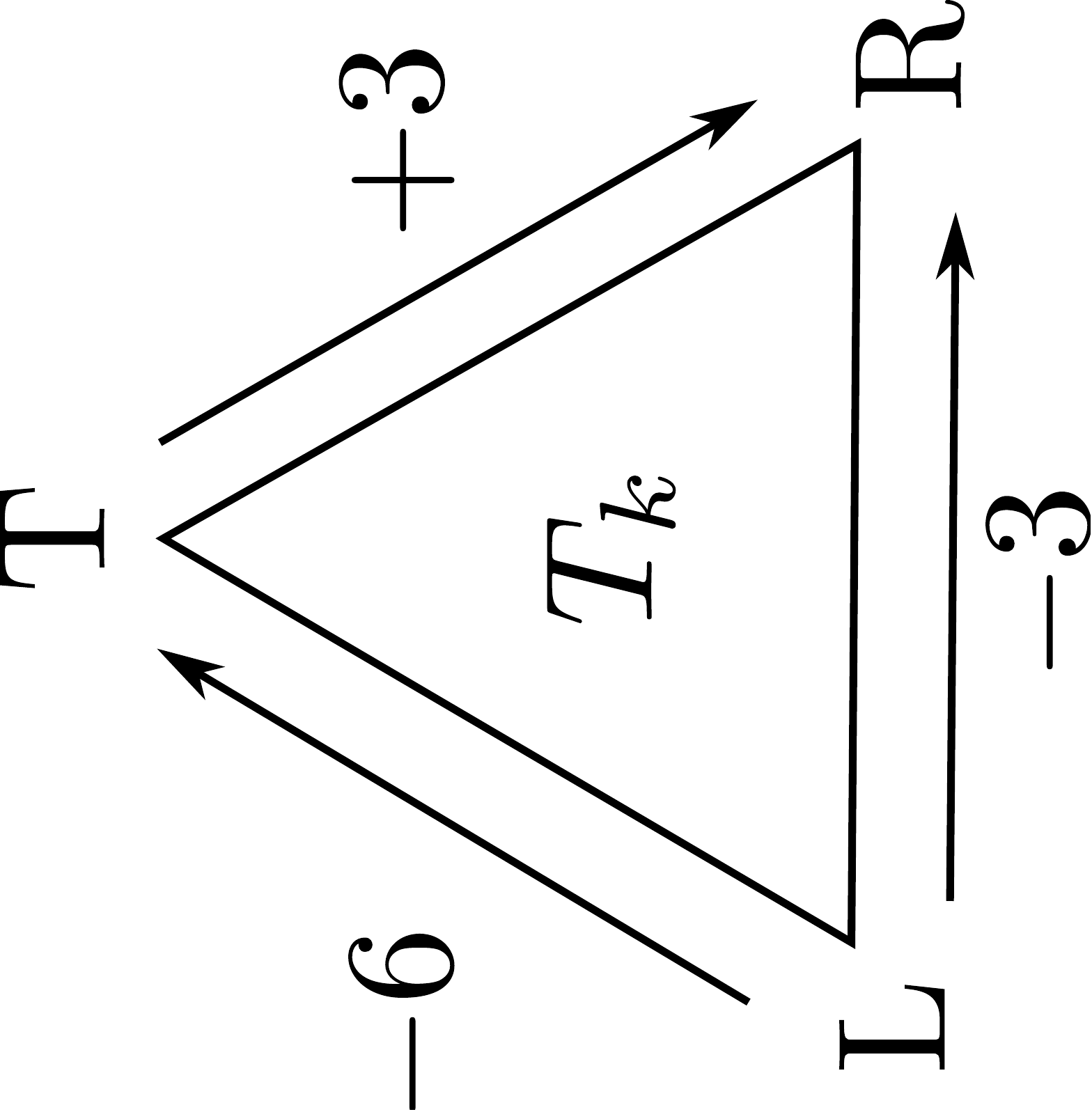}
  \end{center}

  \caption[]{ \label{fig:steps-in-triangle} Let L (left), R (right), T (top) denote the vertices $\msmat{k\\0\\0}$, $\msmat{0\\k\\0}$, $\msmat{0\\0\\k}$ of $T_k$, respectively. Taking a single step from L to T changes the value of the crank $c(\lambda)=\lambda_1 - \lambda_3$ by $-6$ modulo $m$, i.e., $c\left(V\left(\tau+\msmat{-1\\0\\1}\right) + \mu\right) - c\left(V\tau+\mu\right) = c\left(V\msmat{-1\\0\\1}\right) = -6 \mmod m$, and similarly for the other steps shown in the figure.
  }
\end{figure*}

The function $f$ is a permutation by construction. Moreover, $c_{LS}(f(\lambda))=c_{LS}(\lambda)-3 \mmod m$, which we can see as follows.
First, we observe that taking one step in any triangle $T_k$ changes the crank value as shown in Figure~\ref{fig:steps-in-triangle}.
Thus, if triangles are oriented as shown in the bottom left of Figure~\ref{fig:labeled-origami-cases-vertical}, then within each triangle the crank value will consistently change by $-3$ when taking one step right. Therefore, all we have to ensure is that the triangle arrangement $\psi$ is such that the crank value also changes by $-3$ when stepping right from one triangle to the next (and at the boundary). To this end, it is important to observe that for a fixed $\mu$ the crank values at the vertices $\mathrm{L}=\msmat{k\\0\\0}$, $\mathrm{R}=\msmat{0\\k\\0}$ and $\mathrm{T}=\msmat{0\\0\\k}$ of $T_k$ are fixed constants modulo $m$ that do not depend on $k$ or $m$. For example, we can compute that for $r=2m-2$ and $\mu=\msmat{6\\1\\1}$ we have
\begin{eqnarray*}
  c_{LS}(\phi_{k,r}^{-1}(\mu,\mathrm{L})) &=& -5 \mod m,\\
  c_{LS}(\phi_{k,r}^{-1}(\mu,\mathrm{R})) &=& 0 \mod m,\\
  c_{LS}(\phi_{k,r}^{-1}(\mu,\mathrm{T})) &=& 5 \mod m.
\end{eqnarray*}
This data is recorded for all vertices of all triangles in Figures~\ref{fig:labeled-origami-cases-vertical} and \ref{fig:labeled-origami-cases-horizontal}.
In the case $r=2m-2$, we can see that moving horizontally from the triangle for $\mu=\msmat{6\\1\\1}$ to the triangle for $\mu=\msmat{4\\2\\2}$ the crank value
changes by $-3$ as desired, and similarly for all other steps to the right between different triangles. In particular, going from the rightmost edge of the rectangle to the leftmost edge, the crank value also changes by $-3$. This completes the proof that $f$ cycles the crank values and thus shows that $c_{LS}$ is a supercrank for $p(n,3)$.
\end{proof}

Above, we have used the methods developed in Sections~\ref{sec:ehrhart} and \ref{sec:Ehrhart-cranks} to prove that $c_{LS}$ is a supercrank.
Thus the question arises whether the crank $c_{LS}$ is actually an Ehrhart crank in the sense of Theorem~\ref{thm:unlabeled-technical};
i.e., whether $c_{LS}(\lambda)=c(\lambda)=\eta\circ\psi\circ\phi(\lambda)$ for the triangle arrangements $\psi$ given in the figures.
For the cases $r'\in\{\pm(2m-2),\pm(2m+1)\}$, this is true immediately when choosing $\eta(x,y)=-3x+3y+\gamma$, where $\gamma$ is the value in the lower-left corner of the respective rectangle.
However, for the cases $r'\in\{0,\pm1,\pm2\}$, this does not work as the crank value $c_{LS}$ does not increase linearly when moving to the right: in each case there are two ``jumps'' when moving from one small sub-rectangle to the next. While this does not affect the above proof at all, it means that no affine linear function $\eta$ can produce $c_{LS}$ as an Ehrhart crank. However, if we generalize the notion of an Ehrhart crank to allow for functions $\eta$ that are piecewise linear, then the crank $c_{LS}$ is
an Ehrhart crank.

\section{A Second Proof of Theorem \ref{main theorem}.}
\label{sec:mountain}

We provide an alternate proof of Theorem \ref{main theorem} by cycling the partitions of $P(n,3)$ utilizing vector representations as in Figure \ref{fig:partition-triangle}.
The result is bijection from $P(n,3)$ back to itself.

\begin{figure}
\begin{center}
\includegraphics[angle=270,width=13cm]{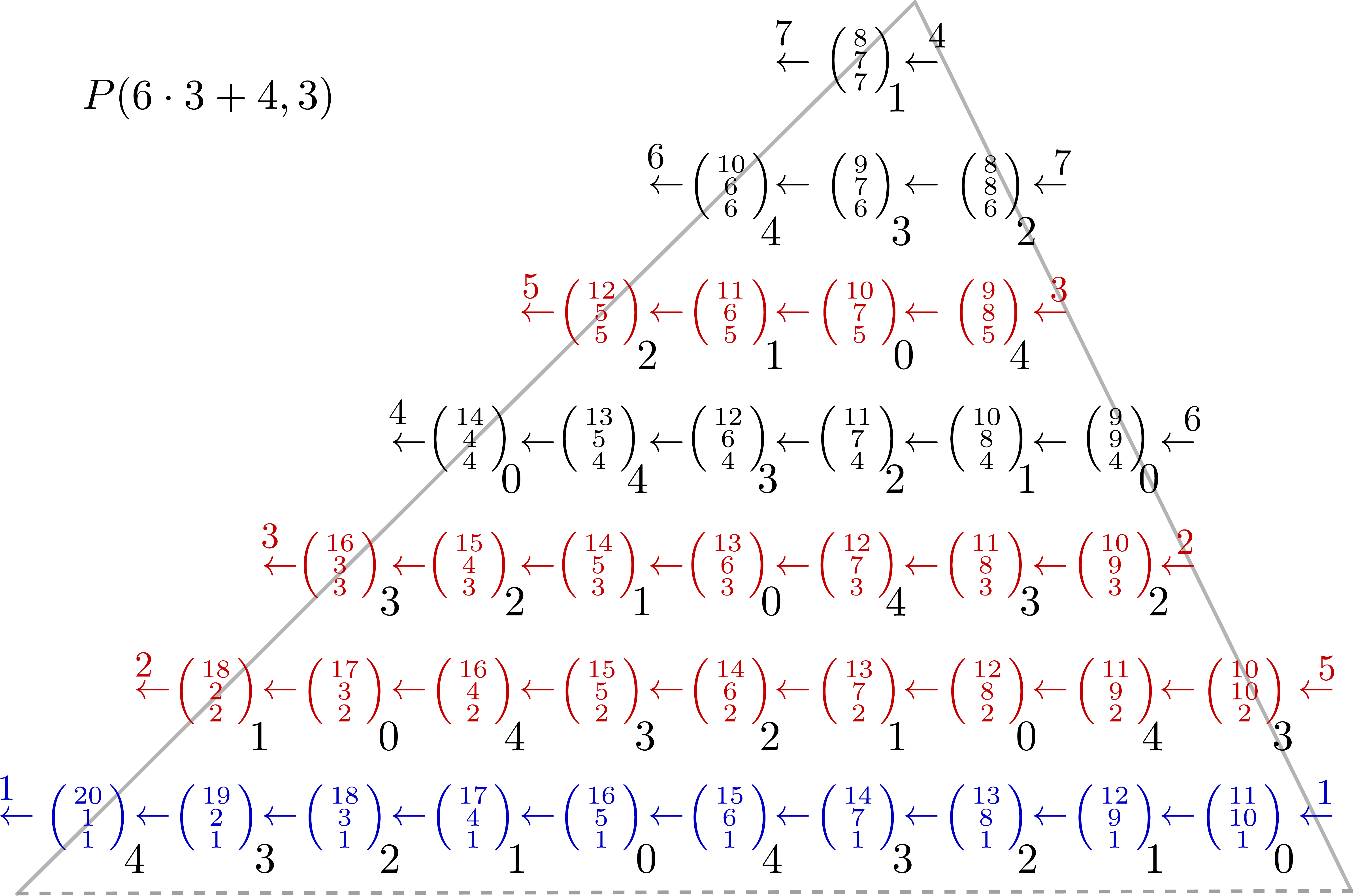}
\end{center}
 \caption[]{ \label{fig:mountain} There are three distinct $m$-cycles for the case $p(6\cdot3+4,3)=40$ with $m=5,~k=1$ and $r'=-(2m-2))$.
They are formed by 1) cycling the 20 partitions in the horizontal rows 2, 3, and 5 of $P(22,3)$ (shown in red), 2) cycling the 10 partitions in the horizontal rows 4, 6, and 7 of $P(22,3)$ (shown in black), and 3) cycling the 10 partitions in the horizontal row 1 of $P(22,3)$ (shown in blue). The permutation on the rows defined by the bijection between the left and right boundaries is shown by the labels on the arrows at the sides, see bullet $r'=-(2m-2)$ in this section. The crank values of the partitions are shown as subscripts on the partitions, e.g., the partition $22=17+3+2$ has crank value $0$.}
\end{figure}

\begin{proof}[Alternative Proof of Theorem~\ref{main theorem}]
Let $n$ be such that $m | p(n,3)$.
By \rprop{6j-1prop}, we may write $n = 6mk + r'$ for some $r' \in \{0,\pm 1,\pm 2,\pm (2m-2), \pm (2m+1)\}$.
Following the approach taken in Section~\ref{labeled origami}, we show that $c_{LS}$ is a crank by constructing a permutation $f$ on $P(n,3)$ with the property $c_{LS}(f(\lambda))=c_{LS}(\lambda)+1\mmod m$. A partition $\lambda\in P(n,3)$
is on the left border $B$ of the partition triangle if and only if $\lambda_2=\lambda_3$. As illustrated in Figure~\ref{fig:mountain}, we now define $f$ as follows:
\begin{enumerate}
\item For $\lambda_2\not=\lambda_3$,~ $f(\lambda)=f\left(\msmat{\lambda_1\\ \lambda_2 \\ \lambda_3}\right)=\msmat{\lambda_1\\ \lambda_2-1 \\ \lambda_3+1}$; i.e., if $\lambda$ is not on the left border of $P(n,3)$ the map $f$ translates $\lambda$ one unit to the left within $P(n,3)$.
\item For $\lambda_2=\lambda_3$,~ $f(\lambda)$ is defined by case distinction on $r'$ (see list below).
If $\lambda$ is on the left border of $P(n,3)$, then $f$ maps $\lambda$ to the right border of $P(n,3)$ such that the value of $c_{LS}$ increases by 1. \label{ii}
\end{enumerate}
It is now straightforward to check that $f$ is injective and that $c_{LS}(f(\lambda))=c_{LS}(\lambda)+1\mmod m$.
For partition $\lambda\not\in B$ this is immediate, and for $\lambda\in B$ this follows from the list of case distinctions below.
Thus $f$ cycles the values of $c_{LS}$ as in Section~\ref{labeled origami}, which proves that $c_{LS}$ is a supercrank for
$p(n,3)$.
\end{proof}

Figure \ref{fig:mountain} provides an example of the three distinct $m-$cycles for the case $p(22,3)=40$ with $m=5,~k=1$, and $r'=-(2m-2)$.

We note that a partition $\lambda$ on the left border $B$ of $P(n,3)$ has the form $\lambda=\msmat{6mk+r'-2\lambda_3\\ \lambda_3 \\ \lambda_3}$.
For such partitions, we define the map $f$ by the following list of nine case distinctions depending on $r'$:

\begin{itemize}
\item $r'=-(2m+1)$

\begin{equation}\nonumber
    f(\lambda)=\left\{
      \begin{array}{cc}
      \msmat{\frac{6mk+r'+1}{2} - \lambda_3 \\ \frac{6mk+r'+1}{2} - (\lambda_3+1) \\ 2\lambda_3} & 1\leq \lambda_3 \leq mk+\floor{\frac{r'}{6}}
\\ ~\\
      \msmat{\frac{6mk+r'+1}{2} - \left(\lambda_3-mk-\floor{\frac{r'}{6}}\right) \\ \frac{6mk+r'+1}{2} - \left(\lambda_3-mk-\floor{\frac{r'}{6}}\right) \\ 2\left(\lambda_3-mk-\floor{\frac{r'}{6}}\right)-1}  & mk+\floor{\frac{r'}{6}}+1\leq \lambda_3 \leq 2\left(mk+\floor{\frac{r'}{6}}\right)
      \end{array}
 \right.
 \end{equation}

\item $r'=-(2m-2)$
\begin{equation}\nonumber
    f(\lambda)=\left\{
      \begin{array}{cc}
      \msmat{\frac{6mk+r'}{2} - (\lambda_3-1) \\ \frac{6mk+r'}{2} - \lambda_3 \\ 2\lambda_3-1} & 1\leq \lambda_3 \leq mk+\floor{\frac{r'}{6}}+1
\\ ~\\
      \msmat{\frac{6mk+r'}{2} - \left(\lambda_3-mk-\floor{\frac{r'}{6}}-1\right) \\ \frac{6mk+r'}{2} - \left(\lambda_3-mk-\floor{\frac{r'}{6}}-1\right) \\ 2\left(\lambda_3-mk-\floor{\frac{r'}{6}}-1\right)} &  mk+\floor{\frac{r'}{6}}+2\leq \lambda_3 \leq 2(mk+\floor{\frac{r'}{6}})+1
      \end{array}
 \right.
 \end{equation}

\item $r'=-2$
\begin{equation}\nonumber
    f(\lambda)=\left\{
      \begin{array}{cc}
     \msmat{\frac{6mk}{2} - (\lambda_3+1) \\ \frac{6mk}{2} - (\lambda_3+1) \\ 2\lambda_3} &
1\leq \lambda_3 \leq mk-1
\\ ~\\
     \msmat{\frac{6mk}{2} - \left(\lambda_3+1-2j\right) \\ \frac{6mk}{2} - \left(\lambda_3+2-2j\right) \\ 2\left(\lambda_3-2j\right)+1} & mk\leq \lambda_3 \leq mk-1+2j
\\ ~\\
     \msmat{\frac{6mk}{2} - \left(\lambda_3+1-2j-mk\right) \\ \frac{6mk}{2} - \left(\lambda_3+2-2j-mk\right) \\ 2\left(\lambda_3+1-2j-mk\right)-1} & mk+2j\leq \lambda_3 \leq 2mk-1
      \end{array}
 \right.
 \end{equation}

\item $r'=-1$
\begin{equation}\nonumber
    f(\lambda)=\left\{
      \begin{array}{cc}
    \msmat{\frac{6mk}{2} - \lambda_3 \\ \frac{6mk}{2} - (\lambda_3+1) \\ 2\lambda_3} &
1\leq \lambda_3 \leq mk-1
\\~\\
 \msmat{\frac{6mk}{2} - \left(\lambda_3+1-4j\right) \\ \frac{6mk}{2} - \left(\lambda_3+1-4j\right) \\ 2\left(\lambda_3+1-4j\right)-1} &
 mk\leq \lambda_3 \leq mk+4j-1
\\~\\
 \msmat{\frac{6mk}{2} - \left(\lambda_3+1-4j-mk\right) \\ \frac{6mk}{2} - \left(\lambda_3+1-4j-mk\right) \\ 2\left(\lambda_3+1-4j-mk\right)-1} &
mk+4j\leq \lambda_3 \leq 2mk-1
      \end{array}
 \right.
 \end{equation}

\item $r'=0$
\begin{equation}\nonumber
    f(\lambda)=\left\{
      \begin{array}{cc}
   \msmat{\frac{6mk}{2} - (\lambda_3-1+2j) \\ \frac{6mk}{2} - \left(\lambda_3+2j\right) \\ 2\left(\lambda_3+2j\right)-1} &
1\leq \lambda_3 \leq mk-2j
\\ ~\\
\msmat{\frac{6mk}{2} - \left(\lambda_3-1+2j-mk\right) \\ \frac{6mk}{2} - \left(\lambda_3+2j-mk\right) \\ 2\left(\lambda_3+2j-mk\right)-1} &
mk-2j+1\leq \lambda_3 \leq mk
\\~\\
\msmat{\frac{6mk}{2} - \left(\lambda_3-2j\right) \\ \frac{6mk}{2} - \left(\lambda_3-2j\right) \\ 2\left(\lambda_3-2j\right)} &
mk+1\leq \lambda_3 \leq mk+2j
\\~\\
\msmat{\frac{6mk}{2} - \left(\lambda_3-2j-mk\right) \\ \frac{6mk}{2} - \left(\lambda_3-2j-mk\right) \\ 2\left(\lambda_3-2j-mk\right)} &
mk+2j+1\leq \lambda_3 \leq 2mk
      \end{array}
 \right.
 \end{equation}

\item $r'=1$
\begin{equation}\nonumber
    f(\lambda)=\left\{
      \begin{array}{cc}
  \msmat{\frac{6mk}{2} - (\lambda_3-1) \\ \frac{6mk}{2} - (\lambda_3-1) \\ 2\lambda_3-1} &
1\leq \lambda_3 \leq mk
\\~\\
\msmat{\frac{6mk}{2} - \left(\lambda_3-1-2j\right) \\ \frac{6mk}{2} - \left(\lambda_3-2j\right) \\ 2\left(\lambda_3-2j\right)} &
mk+1\leq \lambda_3 \leq mk+2j
\\~\\
\msmat{\frac{6mk}{2} - \left(\lambda_3-1-2j-mk\right) \\ \frac{6mk}{2} - \left(\lambda_3-2j-mk\right) \\ 2\left(\lambda_3-2j-mk\right)} &
mk+2j+1\leq \lambda_3 \leq 2mk
      \end{array}
 \right.
 \end{equation}

\item $r'=2$
\begin{equation}\nonumber
    f(\lambda)=\left\{
      \begin{array}{cc}
\msmat{\frac{6mk}{2} - (\lambda_3 - 2) \\ \frac{6mk}{2} - (\lambda_3-1) \\ 2\lambda_3-1} &
1\leq \lambda_3 \leq mk
\\~\\
 \msmat{\frac{6mk}{2} - \left(\lambda_3-1-4j\right) \\ \frac{6mk}{2} - \left(\lambda_3-1-4j\right) \\ 2\left(\lambda_3-4j\right)} &
mk+1\leq \lambda_3 \leq mk+4j
\\~\\
 \msmat{\frac{6mk}{2} - \left(\lambda_3-1-4j-mk\right) \\ \frac{6mk}{2} - \left(\lambda_3-1-4j-mk\right)\\ 2\left(\lambda_3-4j-mk\right)} &
mk+4j+1\leq \lambda_3 \leq 2mk
      \end{array}
 \right.
 \end{equation}

\item $r'=2m-2$
\begin{equation}\nonumber
    f(\lambda)=\left\{
      \begin{array}{cc}
 \msmat{\frac{6mk+r'}{2} - \lambda_3 \\ \frac{6mk+r'}{2} - \lambda_3 \\ 2\lambda_3} &
1\leq \lambda_3 \leq mk+\floor{\frac{r'}{6}}
\\~\\
 \msmat{\frac{6mk+r'}{2} - \left(\lambda_3-1-mk-\floor{\frac{r'}{6}}\right) \\ \frac{6mk+r'}{2} - \left(\lambda_3-mk-\floor{\frac{r'}{6}}\right) \\ 2\left(\lambda_3-mk-\floor{\frac{r'}{6}}\right)-1} &
mk+\floor{\frac{r'}{6}}+1\leq \lambda_3 \leq 2(mk+\floor{\frac{r'}{6}})
      \end{array}
 \right.
 \end{equation}

\item $r'=2m+1$
\begin{equation}\nonumber
    f(\lambda)=\left\{
      \begin{array}{cc}
 \msmat{\frac{6mk+r'+1}{2} - \lambda_3  \\ \frac{6mk+r'+1}{2} - \lambda_3 \\ 2\lambda_3-1} &
1\leq \lambda_3 \leq mk+\floor{\frac{r'}{6}}+1
\\~\\
  \msmat{\frac{6mk+r'+1}{2} - \left(\lambda_3-1-mk-\floor{\frac{r'}{6}}\right) \\ \frac{6mk+r'+1}{2} - \left(\lambda_3-mk-\floor{\frac{r'}{6}}\right) \\ 2\left(\lambda_3-1-mk-\floor{\frac{r'}{6}}\right)} &
mk+\floor{\frac{r'}{6}}+2\leq \lambda_3 \leq 2(mk+\floor{\frac{r'}{6}})+1.      \end{array}
 \right.
 \end{equation}
\end{itemize}

\section{Applications to other primes}
\label{section6j+1}

In the previous sections, we restricted our attention to primes $m \equiv -1 \pmod 6$
because the largest part minus the smallest part modulo $m$ is a supercrank
witnessing every instance of divisibility of $p(n,3)$ by any such $m$.
For every prime $m' \equiv 1 \pmod 6$, all of the same techniques still apply
for proving and witnessing some divisibility of $p(n,3)$ modulo $m'$,
but in this case we no longer know of a supercrank for even one such $m'$.
As it turns out, the largest part minus the smallest part modulo $m'$
still witnesses \emph{most} of the divisibility of $p(n,3)$ by $m'$;
however, $p(n,3)$ enjoys two more arithmetic progressions of divisibility modulo primes $\equiv 1 \pmod 6$,
and none of the techniques from Sections \ref{sec:Ehrhart-cranks}, \ref{labeled origami}, or \ref{sec:mountain} can account for this extra divisibility.
We describe this extra divisibility with an analog of \rprop{6j-1prop} below.

\begin{proposition}\label{6j+1prop}
Let $m'  = 6j+1$ be prime.
Then
$
p(n,3) \equiv 0 \pmod {m'}$   if and only if
$$
n \equiv \pm (0,1,2,2m'-1,2m'+2,3m'+s(m'-1)) \pmod {6m'},
$$
where $s$ satisfies $s^2 \equiv -3 \pmod {m'}$.

\end{proposition}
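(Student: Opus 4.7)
The plan is to follow exactly the template established in the proof of Proposition~\ref{6j-1prop}. Write $n = 6k + r$ for $-2 \le r \le 3$ and appeal to the closed forms in (\ref{p(6k+r,3)}). The cases $r \in \{0, \pm 1, \pm 2\}$ will be routine linear divisibility arguments that exactly mirror Cases 1 and 2 of Proposition~\ref{6j-1prop}, with only the modular inverse of $3$ changing, while the case $r = 3$ is the genuinely new phenomenon: for $m' \equiv 1 \pmod 6$ the quadratic $3k^2 + 3k + 1$ now \emph{does} split modulo $m'$, producing the extra arithmetic progression.

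For $r = 0$, we have $p(n,3) = 3k^2$ and since $\gcd(3, m') = 1$ the condition $m' \mid p(n,3)$ reduces to $m' \mid k$, giving $n \equiv 0 \pmod{6m'}$. For $r \in \{\pm 1, \pm 2\}$, $p(n,3) = 3k^2 + rk = k(3k+r)$, so either $m' \mid k$ (yielding $n \equiv r \pmod{6m'}$) or $3k \equiv -r \pmod{m'}$. In the latter case, since $3 \cdot (-2j) \equiv 1 \pmod{m' = 6j+1}$, I obtain $k \equiv 2jr \pmod{m'}$, so $n = 6k + r \equiv 12jr + r = (2m'-2)r + r = (2m'-1)r \pmod{6m'}$. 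Substituting $r = \pm 1, \pm 2$ and observing that $\pm 2(2m'-1) \equiv \mp(2m'+2) \pmod{6m'}$ recovers exactly the residues $\pm 1, \pm 2, \pm(2m'-1), \pm(2m'+2)$.

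The substantive case is $r = 3$. Multiplying $3k^2 + 3k + 1$ by $12$ (invertible modulo $m' > 3$) converts the divisibility condition into $(6k+3)^2 \equiv -3 \pmod{m'}$. A standard Legendre symbol computation gives $\left(\frac{-3}{m'}\right) = \left(\frac{-1}{m'}\right)\left(\frac{3}{m'}\right)$, and via quadratic reciprocity this reduces to $\left(\frac{m'}{3}\right)$, which is $+1$ precisely when $m' \equiv 1 \pmod 3$. Hence $-3$ is a quadratic residue mod $m'$, so we may fix $s$ with $s^2 \equiv -3 \pmod{m'}$, and the solutions are $6k + 3 \equiv \pm s \pmod{m'}$. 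To lift via the Chinese Remainder Theorem under the constraint $n \equiv 3 \pmod 6$, I verify directly that $N := 3m' + s(m'-1)$ satisfies $N \equiv 3 \pmod 6$ (using $m' - 1 \equiv 0$ and $3m' \equiv 3 \pmod 6$) and $N \equiv -s \pmod{m'}$, while $-N$ represents the other residue class; combining with the $\pm$ symmetry in $s$ yields exactly the stated set of residues $\pm(3m' + s(m'-1)) \pmod{6m'}$.

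The principal obstacle relative to Proposition~\ref{6j-1prop} is simply the sign reversal in the Legendre symbol argument: in that proof, Case 3 contributed no solutions precisely because $-3$ was a non-residue mod $m$, whereas here the same computation produces an extra progression. The only new work is therefore bookkeeping, namely packaging the CRT lift into the clean explicit representative $3m' + s(m'-1)$; no deeper congruence theory is required.
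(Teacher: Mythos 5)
Your proof is correct and follows exactly the route the paper intends: the paper states Proposition~\ref{6j+1prop} without proof as the direct analog of Proposition~\ref{6j-1prop}, and your computation (linear cases for $r\in\{0,\pm1,\pm2\}$ via the inverse $-2j$ of $3$, then the completed square $(6k+3)^2\equiv -3\pmod{m'}$ with the Legendre symbol now equal to $+1$, lifted by CRT to $\pm(3m'+s(m'-1))$) is precisely that analog. All the arithmetic checks out, including the identification $2(2m'-1)\equiv-(2m'+2)\pmod{6m'}$ and the verification that $3m'+s(m'-1)\equiv 3\pmod 6$ and $\equiv -s\pmod{m'}$.
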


Our combinatorial methods for proving Theorem \ref{main theorem}
work here to prove

\begin{theorem}\label{6j+1 theorem}
Let $m'=6j+1$ be prime.
Then $c_{LS}$, largest part minus smallest part modulo $m'$,
is a crank witnessing
$p(6m'k\pm 0,1,2,2m'-2,2m'+1;3)\equiv 0\pmod{m'}$.
\end{theorem}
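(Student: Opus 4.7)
The plan is to mirror the proof strategy developed in \rsec{labeled origami} for \rthm{main theorem}. For each admissible residue $r' \in \{\pm 0,\pm 1,\pm 2,\pm(2m'-2),\pm(2m'+1)\}$, I would write $6m'k + r' = 6k_0 + r$ with $r \in \{0,\ldots,5\}$ and an integer valued function $k_0=k_0(k)$, then apply \rlem{lem:decomposition} to biject $P(6m'k+r',3)$ with $Q_{k_0,r}$. As \rfig{fig:master-tile-set} shows, this decomposes $P(6m'k+r',3)$ into $h^*_{r}$ copies of $T_{k_0}$, $h^*_{r+6}$ copies of $T_{k_0-1}$, and $h^*_{r+12}$ copies of $T_{k_0-2}$, each copy indexed by a fundamental partition $\mu$ in the corresponding slice of $\FFF_3$.

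Next, for each residue class I would construct an explicit triangle arrangement $\psi$ analogous to the ones in \rfig{fig:labeled-origami-cases-vertical} and \rfig{fig:labeled-origami-cases-horizontal}, reassembling the six triangles into a rectangle $R(k)$ whose distinguished side length is divisible by $m'$ for every $k$. Two conditions must be enforced: the triangles must tile $R(k)$ exactly, and $c_{LS}$ must advance by a constant $\delta$ coprime to $m'$ at every unit step along the distinguished direction, including at the wrap-around. Because the behavior of $c_{LS}$ under unit moves inside any $T_{k_0-i}$ is governed by \rfig{fig:steps-in-triangle} and is independent of $m'$, and because the crank values at the corners of each translate $\mu + V_3 T_{k_0-i}$ depend only on the fundamental partition $\mu$, the verification reduces to checking a finite list of gluing conditions among the fundamental partitions in $H_r \cup H_{r+6} \cup H_{r+12}$, exactly as in the $6j-1$ case.

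Once $\psi$ is in hand, define
\[
  f(\lambda) \;=\; (\psi \circ \phi_{k_0,r})^{-1}\bigl((\psi \circ \phi_{k_0,r})(\lambda) + e\bigr),
\]
where $e$ is the unit vector along the distinguished direction and the addition is taken modulo the divisible side length. Then $f$ is a permutation of $P(6m'k+r',3)$ satisfying $c_{LS}(f(\lambda)) \equiv c_{LS}(\lambda) + \delta \pmod{m'}$. This cycles crank values and proves that $c_{LS}$ is a crank for each of the listed congruences.

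The main obstacle is producing correct rectangle dimensions and triangle layouts for all ten residue classes, since the arithmetic modulo $3$ changes when $m = 6j-1$ is replaced by $m' = 6j+1$. The formulas for $k_0(k)$ in Table~\ref{tab:unlabeled} will shift (for example, expressions of the form $mk + \frac{m-2}{3}$ are no longer integral when $m' \equiv 1 \pmod 3$), so a new table, together with a new choice of $\delta$ in each case, must be computed from scratch; the divisible side length must still factor as a multiple of $m'$ after the substitution, and the gluing conditions between adjacent triangles rechecked against \rfig{fig:steps-in-triangle}. I expect the bulk of the work to lie in this combinatorial bookkeeping rather than in any conceptual innovation. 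Finally, one should observe that the sporadic progression $n\equiv \pm(3m'+s(m'-1)) \pmod{6m'}$ from \rprop{6j+1prop} is deliberately excluded from the statement, because in that case the six triangles $T_{k_0}, T_{k_0-1}, T_{k_0-2}$ admit no rectangular reassembly with a side divisible by $m'$; this is precisely the obstruction preventing $c_{LS}$ from being a supercrank when $m' \equiv 1 \pmod 6$.
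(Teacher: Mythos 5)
Your plan is essentially identical to what the paper does: the paper gives no explicit proof of this theorem, stating only that the combinatorial methods of Sections 4--6 carry over, and your outline --- recompute the analogue of Table~\ref{tab:unlabeled} (new $k_0$, $r$, $\ell_1$, $\ell_2$, $\delta$ for each residue class, adjusted because $\frac{m'-2}{3}$ is no longer integral), rebuild the triangle arrangements, verify the gluing conditions via Figure~\ref{fig:steps-in-triangle}, and cycle along the side divisible by $m'$ --- is exactly the intended argument, including the correct explanation for why $\pm(3m'+s(m'-1))$ is excluded. One caution before you carry out the bookkeeping: by Proposition~\ref{6j+1prop} the admissible residues for $m'\equiv 1 \pmod 6$ are $\pm(2m'-1)$ and $\pm(2m'+2)$, not $\pm(2m'-2)$ and $\pm(2m'+1)$ as printed in the theorem statement, so your case list of $r'$ should be built from the former.
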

However, the congruences $p(6m'k\pm 3m'+s(m'-1), 3)\equiv 0\pmod{m'}$ from Proposition \ref{6j+1prop} are resistant to our methods.
We have no combinatorial proof of $p(6m'k\pm 3m'+s(m'-1), 3)\equiv 0\pmod{m'}$, nor do we know of a crank statistic that provides a combinatorial witness.
One explanation for why the same techniques do not provide a combinatorial proof of divisibility
is that on the Ehrhart side, since $3m + s(m'-1) \equiv 3 \pmod{6}$, our decomposition of $P(n,3)$
now produces triangles of three different sizes instead of just two.
Because of these three different sizes, it is
impossible to rearrange the six triangles into a rectangle,
and so no Ehrhart crank is forthcoming.

Regarding divisibility of $p(n,3)$ by the remaining two primes, 2 and 3,
supercranks in both cases were found in \cite{EichhornKronholm-forthcoming}.
It is interesting to observe that the parity of the largest part minus smallest part
again appears as a supercrank witnessing every instance of $2 | p(n,3)$.
However, the supercrank witnessing each and every instance of $3 | p(n,3)$ is a different statistic; it is simply the largest part modulo 3.

\section{Conclusion}

Dyson's 70-year-old request for a \emph{bijective} proof that the rank witnesses Ramanujan's congruences modulo 5 and 7
remains unfulfilled.
Since Garvan, Kim, and Stanton made the remarkable stride
of discovering purely bijective proofs of Ramanujan's first four congruences
along with new crank statistics witnessing them twenty-five years ago \cite{GKS},
surprisingly little has been done treating congruences for partition functions using bijective methods.

In this paper, we have taken two approaches
to provide bijective proofs and combinatorial witnesses for
every instance of divisibility of $p(n,3)$ by primes $m \equiv -1 \pmod 6$
as well as $9/11$ths of the divisibility by primes $m \equiv 1 \pmod 6$.
The first approach is
rooted in
Ehrhart theory.
Remarkably, although there are many
Ehrhart cranks one can construct
using this approach
that witness each of these divisibilities,
one very well-poised crank, $c_{LS}$, the largest part minus smallest part taken modulo $m$,
witnesses every single one.
The second approach relies upon creating a permutation
of partitions whose cycle decomposition includes only cycles
with lengths divisible by $m$,
a fact also witnessed by $c_{LS}$.
It remains to be seen if the fact that we have a supercrank for primes $\equiv-1 \pmod 6$
while the same crank only witnesses $9/11$ths of the divisibility by
primes $\equiv1 \pmod 6$ will lead to a critical insight,
or if the discrepancy is just an unremarkable property of the integers.

Our hope here is that by revealing the structure of these sets of partitions in a new way,
and in a way that reveals \emph{why} certain crank statistics witness divisibility
of $p(n,3)$, we will open the gateway to a deeper understanding of
other partition functions and their divisibility.
We have restricted our attention here to studying the geometry and combinatorics
governing $p(n,3)$.
However, these methods
can still be applied when $d>3$,
and in \cite{BreuerEichhornKronholm-forthcoming} the authors treat this more general
setting.

The authors would like to thank Peter Paule and the Research Institute for Symbolic Computation
for their generous support of this research, including support of the first and third
authors as postdoctoral fellows, and support for the second author on two research
visits.
RISC was an especially appropriate facility for this research,
both because of RISC's excellent history of combinatorial research,
and because the remarkable properties of $c_{LS}$ were originally discovered while
searching for cranks for $p(n,d)$ computationally.

\setlength{\bibsep}{0pt}
\renewcommand{\bibfont}{\small}
\bibliographystyle{abbrv}
\bibliography{TESTreferences}

\begin{thebibliography}{10}

\bibitem{Andrews2}
G.~E. Andrews and F.~G. Garvan.
\newblock Dyson's crank of a partition.
\newblock {\em Bull. Amer. Math. Soc. (N.S.)}, 18(2):167--171, 1988.

\bibitem{AS}
A.~O.~L. Atkin and P.~Swinnerton-Dyer.
\newblock Some properties of partitions.
\newblock {\em Proc. London Math. Soc. (3)}, 4:84--106, 1954.

\bibitem{Beck}
M.~Beck and S.~Robins.
\newblock {\em Computing the continuous discretely}.
\newblock Undergraduate Texts in Mathematics. Springer, New York, 2007.

\bibitem{Breuer}
F.~Breuer.
\newblock {An Invitation to Ehrhart Theory: Polyhedral Geometry and its
  Applications in Enumerative Combinatorics}.
\newblock In J.~Gutierrez, J.~Schicho, and M.~Weimann, editors, {\em Computer
  Algebra and Polynomials}, volume 8942 of {\em Lecture Notes in Computer
  Science}, pages 1--29. Springer, 2015.

\bibitem{BreuerEichhornKronholm-forthcoming}
F.~Breuer, D.~Eichhorn, and B.~Kronholm.
\newblock The $l$-box decomposition of partitions with parts from a finite set,
  with applications to congruences and periodicity.
\newblock {\em (in preparation)}.

\bibitem{Cayley1}
A.~Cayley.
\newblock Researches on the partition of numbers.
\newblock {\em Philosophical Transactions of the Royal Society of London},
  146:127--140, 1856.

\bibitem{DeMorgan}
A.~DeMorgan.
\newblock {\em Cambridge Math. Jour.}, (4):87--90, 1843.

\bibitem{Dickson}
L.~E. Dickson.
\newblock {\em History of the theory of numbers. {V}ol. {II}: {D}iophantine
  analysis}.
\newblock Chelsea Publishing Co., New York, 1966.

\bibitem{Dyson}
F.~J. Dyson.
\newblock Some guesses in the theory of partitions.
\newblock {\em Eureka}, (8):10--15, 1944.

\bibitem{Ehrhartpolynomial}
E.~Ehrhart.
\newblock Sur les poly\`edres rationnels homoth\'etiques \`a {$n$}\ dimensions.
\newblock {\em C. R. Acad. Sci. Paris}, 254:616--618, 1962.

\bibitem{Ehrhart1}
E.~Ehrhart.
\newblock Sur un probl\`eme de g\'eom\'etrie diophantienne lin\'eaire. {I}.
  {P}oly\`edres et r\'eseaux.
\newblock {\em J. Reine Angew. Math.}, 226:1--29, 1967.

\bibitem{Ehrhart2}
E.~Ehrhart.
\newblock Sur un probl\`eme de g\'eom\'etrie diophantienne lin\'eaire. {II}.
  {S}yst\`emes diophantiens lin\'eaires.
\newblock {\em J. Reine Angew. Math.}, 227:25--49, 1967.

\bibitem{EichhornKronholm-forthcoming}
D.~Eichhorn and B.~Kronholm.
\newblock Supercranks for partitions with a fixed number of parts.
\newblock {\em (in preparation)}.

\bibitem{GKS}
F.~Garvan, D.~Kim, and D.~Stanton.
\newblock Cranks and {$t$}-cores.
\newblock {\em Invent. Math.}, 101(1):1--17, 1990.

\bibitem{Glaisher1}
J.~W.~L. Glaisher.
\newblock On the number of partitions of a number into a given number of parts.
\newblock {\em Quarterly Journal of Pure and Applied Mathematics}, 40:57--143,
  1908.

\bibitem{GuptaT}
H.~Gupta, E.~E. Gwyther, and J.~C.~P. Miller.
\newblock {\em Tables of Partitions}, volume~4 of {\em Royal Soc. Math.
  Tables}.
\newblock Cambridge University Press, Cambridge, 1958.

\bibitem{Herschel}
J.~F.~W. Herschel.
\newblock On circulating functions, and on the integration of a class of
  equations of finite differences into which they enter as coefficients.
\newblock {\em Philosophical Transactions of the Royal Society of London},
  108:144--168, 1818.

\bibitem{Ram}
S.~Ramanujan.
\newblock {\em Collected papers of {S}rinivasa {R}amanujan}.
\newblock AMS Chelsea Publishing, Providence, RI, 2000.
\newblock Edited by G. H. Hardy, P. V. Seshu Aiyar and B. M. Wilson, Third
  printing of the 1927 original, With a new preface and commentary by Bruce C.
  Berndt.

\bibitem{Sylvester}
J.~J. Sylvester and F.~Franklin.
\newblock A {C}onstructive {T}heory of {P}artitions, {A}rranged in {T}hree
  {A}cts, an {I}nteract and an {E}xodion.
\newblock {\em Amer. J. Math.}, 5(1-4):251--330, 1882.

\bibitem{Warburton}
H.~Warburton.
\newblock {\em Trans. Cambridge Phil. Soc.}, 8:471--492, 1849.

\bibitem{Ziegler}
G.~M. Ziegler.
\newblock {\em Lectures on polytopes}, volume 152 of {\em Graduate Texts in
  Mathematics}.
\newblock Springer-Verlag, New York, 1995.

\end{thebibliography}

\end{document}